\newtheorem{Theorem}{Theorem}
\newtheorem{Proposition}{Proposition}
\newtheorem{Lemma}{Lemma}
\newtheorem{Corollary}{Corollary}
\theoremstyle{definition}
\newtheorem{definition}{Definition}
\newtheorem{remark}{Remark}
\def\NN{{\mathcal N}}
\def\CC{{\mathcal C}}
\def\OO{{\mathcal O}}
\def\SO4{{\mathrm{SO(4)}}}
\def\O4{{\mathrm{O(4)}}}
\def\Z{{\mathbb Z}}
\begin{document}

\title{Finite groups acting on hyperelliptic 3-manifolds}


\author[M. Mecchia]{Mattia Mecchia}
\address{M. Mecchia: Dipartimento Di Matematica e Geoscienze, Universit\`{a} degli Studi di Trieste, Via Valerio 12/1, 34127, Trieste, Italy.} \email{mmecchia@units.it}

\subjclass[2010]{57M60, 57M12, 57M25}
\keywords{3-manifolds, finite groups of diffeomorphisms, hyperelliptic involutions, 2-fold branched coverings of links}



\date{\today}

\begin{abstract}
We consider 3-manifolds admitting the action of an involution such that its space of orbits   is  homeomorphic to $S^3.$ Such involutions are called \textit{hyperelliptic} as the manifolds admitting such an action. We consider finite groups acting on  3-manifolds and containing  hyperelliptic involutions whose fixed-point set has $r>2$ components. In particular we prove that a simple group containing such an involution is isomorphic to $PSL(2,q)$ for some odd prime power $q$, or to one of  four other small simple groups.
\end{abstract}

\maketitle

\section{Introduction}
In the present paper, we consider \textit{hyperelliptic 3-manifolds}; these are 3-manifolds  which admit the action of a \textit{hyperelliptic involution}, i.e.  an involution with  quotient space homeomorphic to $S^3$. In this paper  3-manifolds are all smooth, closed and orientable, and the actions of groups on 3-manifolds are  smooth and orientation-preserving.

The analogous concept in dimension 2 is a classical research theme and  many papers  about hyperelliptic Riemann surfaces can be found in the literature. In particular the hyperelliptic involution  of  a Riemann surface of genus at least two is unique and central in the automorphism group of the hyperelliptic Riemann surface; as a consequence, the class of automorphism groups of hyperelliptic Riemann surfaces is very restricted (liftings of finite groups acting on $S^2$).

In dimension 3 the presence of a hyperelliptic involution is equivalent to the fact that the 3-manifold is the 2-fold cover of  $S^3$ branched along a link (in brief \textit{the 2-fold branched cover of a link}). 
In fact the fixed-point set of a hyperelliptic involution is a non-empty union of disjoint, simple, closed curves. If $h$ is a hyperelliptic involution acting on $M$, the projection of  the fixed-point set of $h$ to $M/\langle h \rangle\cong S^3$ is  a link and $M$ is the 2-fold branched cover of this link.
On the other hand if $M$ is a 2-fold branched cover of a link, the involution generating the deck transformation group is hyperelliptic. The class of 3-manifolds consisting of  2-fold branched covers of  links has been extensively studied in the literature.

The aim in this paper is to understand which finite  groups can act on a 3-manifold and contain a hyperelliptic involution; in particular we consider the  case of finite simple groups. In contrast with dimension two, hyperelliptic involutions are not unique.

We remark that if $M$ admits the action of a hyperelliptic involution $h$ whose fixed-point set has $r$ components,  then the first $\mathbb{Z}_2$-homology group  of $M$  has rank $r-1$ (see, for example, \cite[Sublemma 15.4]{sakuma-95}). Hence, a hyperelliptic 3-manifold  determines the number of components of the fixed-point set  of its hyperelliptic involutions  and the situation of different numbers of components can be separately analyzed.

The case  of hyperelliptic involutions with connected fixed-point set is particular, since a manifold  admitting such an action is a $\mathbb{Z}_2$-homology 3-sphere. If $G$ is a finite simple group containing  a hyperelliptic involution with connected fixed-point set, then $G$ is isomorphic to the alternating group $\mathbb{A}_5$ \cite{zimmermann2015}. This result is a consequence of a more general one that states that a simple group acting on a $\mathbb{Z}_2$-homology 3-sphere is isomorphic to a linear fractional group $PSL(2,q)$ for some odd prime power  $q$ or to the alternating group $\mathbb{A}_7$ \cite{mecchia-zimmermann2004}. The proof of this result is  highly non-trivial since it is based on the Gorenstein-Walter classification of simple groups with dihedral Sylow 2-subgroups.  We remark that each $\mathbb{Z}_2$-homology sphere is  a rational homology 3-sphere, but for this broader class of 3-manifolds the situation changes completely: each finite group acts on a rational homology 3-sphere (see \cite{cooper-long} for free actions and \cite{boileau-et-al} for non-free actions). A complete classification of the finite groups acting on integer or $\mathbb{Z}_2$-homology 3-sphere is not known and appears to be difficult.

In the present paper  we consider the case of $r>2$ components and  prove the following theorem for simple groups. 

\begin{Theorem}\label{th:simple}
If $G$ is a finite simple group acting  on a 3-manifold $M$ and containing a hyperelliptic involution $h$ such that its fixed-point set  has $r>2$ components then $G$ is isomorphic to a linear fractional group $PSL(2,q),$ for some odd prime power $q,$ or to one of the following four groups: the projective special linear group $\mathrm{PSL}(3, 5),$ the projective general
unitary group $\mathrm{PSU}(3, 3),$ the Mathieu group $ \mathrm{M}_{11}$ and the alternating group  $\mathbb{A}_7.$
\end{Theorem}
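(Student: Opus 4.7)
The plan is to constrain the Sylow $2$-subgroup of $G$ via the centralizer of the hyperelliptic involution $h$, and then appeal to the classifications of finite simple groups with restricted Sylow $2$-subgroup type. I would first conjugate $h$ so that it lies in the centre of some Sylow $2$-subgroup $T$ of $G$; since $T\subseteq C_G(h)$ and $|T|=|G|_2$, this $T$ is automatically a Sylow $2$-subgroup of $C_G(h)$ as well. The centralizer of $h$ acts on the quotient $M/\langle h\rangle\cong S^3$ through $\bar C:=C_G(h)/\langle h\rangle$, preserving the branch link of $r>2$ components. Because the action is smooth, faithful, and orientation-preserving, the orthogonalization theorem for finite group actions on $S^3$ (a consequence of Thurston-Perelman geometrization) conjugates $\bar C$ into $\mathrm{SO}(4)$.

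Next I would exploit the classical classification of finite subgroups of $\mathrm{SO}(4)$, which are built from pairs of binary polyhedral subgroups of $S^3$, to read off the possible isomorphism types of Sylow $2$-subgroups of $\bar C$, and hence, via the central extension $1\to\langle h\rangle\to T\to T/\langle h\rangle\to 1$, the possible types of $T$ itself. The Brauer-Suzuki theorem forbids $T$ from being generalized quaternion in a non-abelian simple group, while the hypothesis $r>2$, which forces the $\Z_2$-rank of $H_1(M;\Z_2)$ to be at least $2$, produces additional commuting involutions in $T$. I expect these constraints to force $T$ to be dihedral, semidihedral, or wreathed.

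With the Sylow $2$-structure pinned down, I would invoke the Gorenstein-Walter theorem (simple groups with dihedral Sylow $2$-subgroups are $PSL(2,q)$ for $q$ odd, or $\A_7$) together with the Alperin-Brauer-Gorenstein theorem (simple groups with semidihedral or wreathed Sylow $2$-subgroups are $\mathrm{PSL}(3,q)$, $\mathrm{PSU}(3,q)$ for appropriate odd $q$, or $\mathrm{M}_{11}$). This reduces $G$ to the announced candidate list. The last step is to cut the infinite families $\mathrm{PSL}(3,q)$ and $\mathrm{PSU}(3,q)$ down to $q=5$ and $q=3$ respectively, by matching the isomorphism type of $\bar C$, as dictated by the structure theory of these groups, against the short list of subgroups of $\mathrm{SO}(4)$ that can preserve a link of at least three components. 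I expect this matching step to be the main obstacle: it requires detailed information on involution centralizers in $\mathrm{PSL}(3,q)$ and $\mathrm{PSU}(3,q)$ together with a case-by-case check of the $\mathrm{SO}(4)$-subgroup constraint.
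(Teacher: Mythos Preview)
Your overall strategy---constrain the Sylow $2$-subgroup, then invoke Gorenstein--Walter and Alperin--Brauer--Gorenstein, then cut down the $\mathrm{PSL}(3,q)$ and $\mathrm{PSU}(3,q)$ families via the $\mathrm{SO}(4)$ restriction on centralizers---matches the paper's, and your final cut-down step is essentially what the paper does (the criterion used there is that the only non-abelian simple section of $C_G(h)/\langle h\rangle\le\mathrm{SO}(4)$ is $\A_5$). But the middle of your argument has two genuine gaps.

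First, you assert that $h$ can be conjugated into the centre of a Sylow $2$-subgroup $T$ of $G$. This is not automatic: not every involution in a finite group is $2$-central. The paper's Lemma~\ref{lem:central-element} shows only that \emph{some} involution $x$ with $r$-component fixed set is central in a given $2$-group containing $h$, and explicitly warns that $x$ need not be hyperelliptic. So you cannot in general arrange $T\subseteq C_G(h)$ with $|T|=|G|_2$.

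Second, and more seriously, even granting $T/\langle h\rangle\le\mathrm{SO}(4)$, the $2$-subgroups of $\mathrm{SO}(4)$ are far more varied than ``dihedral, semidihedral, or wreathed modulo a central $\Z_2$''. They arise from pairs of cyclic or generalized-quaternion subgroups of $S^3$ and include, for instance, central products such as $Q_8*Q_8$, whose $\Z_2$-extensions are nowhere near the three target types. Your hope that Brauer--Suzuki together with ``$r>2$ produces additional commuting involutions'' will prune this list is not a proof, and the remark about $H_1(M;\Z_2)$ constrains $M$, not $T$. The paper closes this gap by an entirely different mechanism: it proves the geometric Proposition~\ref{prop:generate-dihedral} that in any $2$-group the hyperelliptic involutions with $r>2$ components generate a subgroup that is cyclic of order two or dihedral (this uses Lemma~\ref{lem:r-components} on strong inversions and Lemma~\ref{lem:elementary-abelian-group}). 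That dihedral subgroup is then \emph{weakly closed} in the Sylow (conjugates of hyperelliptic involutions are hyperelliptic), and Fukushima's theorem on groups with a weakly closed dihedral $2$-subgroup forces $S$ to be dihedral, semidihedral, or wreathed (or $G\cong\mathrm{PSU}(3,4)$, which is then excluded). The wreathed case is eliminated not via ABG but by a short counting argument: Brauer--Wong gives a single class of involutions, hence all $2^m+3$ involutions of $S$ are hyperelliptic, contradicting the dihedral-generation proposition. This weakly-closed-dihedral step is the key idea you are missing.
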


We remark that the list of possible groups provided by our theorem is very close to that obtained in the case of $\mathbb{Z}_2$-homology 3-spheres. In the proof of the theorem we use the Gorenstein-Walter classification of simple group with dihedral Sylow 2-subgroups and the Alperin-Brauer-Gorenstein classification of simple group with quasidihedral Sylow 2-subgroups.

It is worth mentioning that   examples of groups containing  a hyperelliptic involution are difficult to construct. The straightforward method consists of taking a finite group of  symmetries of a  link and lifting it to the 2-fold branched cover of the link. In this way we obtain groups with a central hyperelliptic involution. Moreover by the geometrization of 3-manifolds   a finite group acting on $S^3$ is conjugate to a finite subgroup of $\SO4$, so we have a very restricted list of possibilities (see \cite{conway-smith, mecchia-seppi}).

Other examples can be obtained by using Seifert spherical 3-manifolds. 
In particular the  homology Poincar\'e sphere $S^3/I^*_{120},$ the octahedral manifold  $S^3/I^*_{48}$ and  the quaternionic manifold $S^3/Q_8$ admit an action of $\mathbb{A}_5$  that includes a hyperelliptic involution (see \cite{mccullough} for the notations and the computation of the isometry groups and \cite{mecchia-seppi} for a method to compute the space of orbits of the involutions).  The numbers of components of the fixed-point set of the hyperelliptic involutions in the examples are 1, 2 and 3, respectively.

At the moment $\mathbb{A}_5$ is the only known simple group  admitting an action  on a 3-manifold and containing a hyperelliptic involution.  

\vskip1em

A key ingredient in the proof of Theorem~\ref{th:simple} is the analysis of the Sylow 2-subgroups; in particular we prove the following proposition which is interesting in its own right.

\begin{Proposition}\label{prop:generated}
Let $G$ be a finite 2-group acting on a 3-manifold and containing  a  hyperelliptic involution whose fixed-point set has $r>2$ components. The subgroup  generated by all hyperelliptic involutions in $G$  has order two or is dihedral. 
\end{Proposition}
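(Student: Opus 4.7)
The plan is to bound the $2$-rank of the subgroup $H \leq G$ generated by the hyperelliptic involutions and then invoke the classical structure theory of $2$-groups. By the classification of finite $2$-groups of $2$-rank at most $2$, such a group is cyclic, dihedral, semidihedral, or generalized quaternion; of these, only $\mathbb{Z}_2$ and the dihedral groups are generated by involutions (the cyclic groups of order $>2$ and the generalized quaternion groups contain a unique involution, and the semidihedral groups have involutions generating a proper dihedral subgroup). Hence it suffices to show that $G$ contains no three independent pairwise-commuting hyperelliptic involutions.

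A useful local observation is that the isotropy at any fixed point of an orientation-preserving action on a $3$-manifold embeds into $\mathrm{SO}(3)$, whose $2$-rank equals $2$; in particular no elementary abelian subgroup $(\mathbb{Z}_2)^3 \leq G$ has a common fixed point in $M$. Moreover, at a common fixed point $p$ of a $V_4 \leq G$ acting by non-trivial orientation-preserving involutions, the tangent representation decomposes as $T_pM = V_{\chi_{01}} \oplus V_{\chi_{10}} \oplus V_{\chi_{11}}$, so the three fixed curves of the non-trivial elements of $V_4$ pass through $p$ transversely.

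Now suppose for contradiction that $h_1, h_2, h_3 \in G$ are three independent pairwise-commuting hyperelliptic involutions, set $V_4 = \langle h_1, h_2\rangle$, and consider the branched quotient $\pi_V : M \to M/V_4$. If $\mathrm{Fix}(V_4) = \emptyset$, then the descended involution $\bar h_2$ on $M/\langle h_1\rangle = S^3$ has a non-empty smooth $1$-submanifold fixed set, hence by the Smith conjecture an unknotted circle, so $M/V_4 = S^3$ and the branch set is a disjoint union of links $L_1 \sqcup L_2 \sqcup L_3$ with $L_i = \pi_V(\mathrm{Fix}(h_i))$. The hyperellipticity of each $h_i$ translates into the requirement that the double cover of $S^3$ branched over $L_j \cup L_k$ is again $S^3$, which via the formula $\dim_{\mathbb{Z}_2} H_1 = c - 1$ for $c$-component links forces one of $L_j, L_k$ to be empty; tracking the $\langle h_j\rangle$-action on the components of $\mathrm{Fix}(h_i)$ then yields $r \leq 2$, contradicting $r > 2$. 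Thus $\mathrm{Fix}(V_4) \neq \emptyset$, and $h_3$ permutes the finite non-empty set $\mathrm{Fix}(V_4)$ without a fixed point (by the $(\mathbb{Z}_2)^3$ obstruction above). Applying the same dichotomy to the pairs $\langle h_1, h_3\rangle$ and $\langle h_2, h_3\rangle$ and combining the resulting constraints on the three overlapping $V_4$-fixed sets gives the desired contradiction. This last, global step, where the hypothesis $r > 2$ enters essentially both in excluding the case $\mathrm{Fix}(V_4)=\emptyset$ via branch-link component counting and in constraining the combinatorics of the three $V_4$-fixed sets, is the main obstacle.
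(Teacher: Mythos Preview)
Your proposal has two structural gaps before you even reach the ``main obstacle'' you flag at the end.

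First, the classification you invoke is false: the $2$-groups of $2$-rank at most $2$ are \emph{not} just cyclic, dihedral, semidihedral, or generalized quaternion. For instance $\mathbb{Z}_4 \times \mathbb{Z}_2$, $\mathbb{Z}_4 \times \mathbb{Z}_4$, and the modular $2$-groups $M_{2^n}$ all have $2$-rank $2$. You may be thinking of the classification of $2$-groups of \emph{maximal class}, or of those with a cyclic subgroup of index $2$; neither is what you need here. The refined assertion ``a $2$-group generated by involutions and of $2$-rank $\leq 2$ is $\mathbb{Z}_2$ or dihedral'' may well be true, but it is not the standard statement and would itself require proof.

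Second, and more seriously, even granting that refined assertion your reduction is invalid. You propose to show that no three independent \emph{hyperelliptic} involutions commute pairwise, but this does not bound the $2$-rank of $H$. An elementary abelian $(\mathbb{Z}_2)^3 \leq H$ could consist of involutions which are products of hyperelliptic involutions without being hyperelliptic themselves; indeed, even in the dihedral conclusion the central involution of $H$ need not be hyperelliptic. So ruling out commuting triples of hyperelliptic involutions says nothing direct about arbitrary elementary abelian subgroups of $H$.

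The paper's route avoids both issues by not going through $2$-rank at all. It first produces (Lemma~\ref{lem:central-element}) a \emph{central} involution $x \in G$ whose fixed set has $r$ components. By Lemma~\ref{lem:r-components} every hyperelliptic involution is then a strong inversion of $\mathrm{Fix}(x)$, hence preserves each component $K$ of $\mathrm{Fix}(x)$ setwise. Thus $H$ embeds in the setwise stabilizer of the circle $K$, which by Remark~\ref{rem:stabilizer-simple-closed-curve} sits inside $\mathbb{Z}_2 \ltimes (\mathbb{Z}_{2^a} \times \mathbb{Z}_{2^b})$. A short case analysis on $(a,b)$, using Lemma~\ref{lem:elementary-abelian-group} (which is essentially your ``no three independent commuting hyperelliptic involutions'' statement, but deployed only locally inside explicit rank-$3$ subgroups), then forces $H$ to be dihedral. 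The decisive geometric leverage is the central involution $x$ and the invariant circle $K$ it supplies: this lets one control \emph{all} of $H$ at once rather than just its hyperelliptic elements.
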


Since  by Thurston's Orbifold  Geometrization theorem  each hyperelliptic involution of a hyperbolic 3-manifold is conjugate to an isometry  and the isometry groups of  hyperbolic 3-manifolds are finite,  we can easily derive from Proposition~\ref{prop:generated} the following corollary.

\begin{Corollary} \label{cor:bound-for-knots}
There are at most three inequivalent links with at least 3 components which have  the same hyperbolic 2-fold branched cover.
\end{Corollary}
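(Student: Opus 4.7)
The plan is to translate the statement about inequivalent links into a statement about conjugacy classes of hyperelliptic involutions in the isometry group of $M$, and then apply Proposition~\ref{prop:generated} to a Sylow 2-subgroup. By Thurston's Orbifold Geometrization Theorem together with Mostow rigidity, every hyperelliptic involution of the hyperbolic 3-manifold $M$ is conjugate in the diffeomorphism group to an element of the finite isometry group $G := \mathrm{Isom}^+(M)$. Moreover, equivalence classes of links $L \subset S^3$ with $M$ as 2-fold branched cover correspond bijectively to $G$-conjugacy classes of hyperelliptic involutions on $M$: any homeomorphism of pairs $(S^3, L_1) \to (S^3, L_2)$ lifts to an equivariant diffeomorphism of the branched cover intertwining the two deck involutions. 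Since $\mathrm{rank}\, H_1(M;\mathbb{Z}_2) = r-1$, every hyperelliptic involution on $M$ has fixed-point set with the same number $r$ of components, so the hypothesis that some link has at least three components forces $r > 2$ and allows Proposition~\ref{prop:generated} to be applied to every 2-subgroup of $G$.

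Next, choose a Sylow 2-subgroup $S$ of $G$. Every hyperelliptic involution is a 2-element of $G$ and is therefore $G$-conjugate to an involution contained in $S$. Let $H$ denote the subgroup of $S$ generated by all hyperelliptic involutions lying in $S$. The number of $G$-conjugacy classes of hyperelliptic involutions on $M$ is then bounded above by the number of $H$-conjugacy classes of involutions in $H$. By Proposition~\ref{prop:generated} the group $H$ has order two or is dihedral, and a direct inspection shows that such a 2-group has at most three conjugacy classes of involutions: in the cyclic case of order two there is a single class, in the Klein four case there are three singleton classes, and in a dihedral 2-group of order at least eight one finds one central involution together with exactly two conjugacy classes of reflections. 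Combining these estimates yields at most three conjugacy classes of hyperelliptic involutions in $G$, and hence at most three inequivalent links, as desired.

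The main obstacle, apart from the easy counting inside a dihedral 2-group, is the careful verification of the bijection between equivalence classes of links and conjugacy classes of hyperelliptic involutions in $G$. This requires producing equivariant lifts of homeomorphisms of $(S^3, L)$ to the 2-fold branched cover, and then invoking Thurston's theorem to upgrade conjugacy up to isotopy in $\mathrm{Diff}^+(M)$ to honest conjugacy inside the finite group $\mathrm{Isom}^+(M)$. Both ingredients are standard in this area, but they are the substantive geometric input of the argument; once they are in place, the conclusion is a short exercise in finite group theory built on Proposition~\ref{prop:generated}.
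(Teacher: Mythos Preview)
Your proposal is correct and follows precisely the route the paper indicates: pass to the finite isometry group of the hyperbolic manifold via Thurston's orbifold geometrization (and Mostow rigidity), observe that inequivalent links correspond to distinct $G$-conjugacy classes of hyperelliptic involutions, and then apply Proposition~\ref{prop:generated} to a Sylow $2$-subgroup to bound these classes by the at most three conjugacy classes of involutions in a cyclic or dihedral $2$-group. The paper only sketches this argument in the sentence preceding the corollary; your write-up supplies the details the paper leaves implicit, in particular the Sylow step and the explicit count of involution classes in a dihedral $2$-group.
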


This upper bound  was already claimed in \cite{mecchia-zimmermann2004-bis} where a proof  based on a result contained in \cite{reni-zimmermann2001} was given; however,  in the proof in \cite{reni-zimmermann2001} we found a gap that seems not easy to fill. In the present paper the proof of Corollary \ref{cor:bound-for-knots}  is independent of the result in   \cite{reni-zimmermann2001}. We remark that  in \cite{mecchia-zimmermann2004-bis} three is proved to be the optimal bound.

In this paper also the case of arbitrary groups is considered.  If the   hyperelliptic involution  has a fixed-point set  with odd number of components  our methods can be extended: in Section~\ref{sect:odd-components} we prove a theorem which describes the structure of an arbitrary    group acting on a 3-manifold and containing a hyperelliptic involution whose fixed-point set has $r$ components with $r>2$ and odd.

We remark that   the case $r=2$ remains completely open. Considering our approach, one of the main ingredients is the analysis of the centralizers of the hyperelliptic involutions but, if the components of the fixed-point set are two,  this analysis is much more difficult.

From now on $G$ is a finite group acting smoothly and orientation preservingly on a closed, orientable, smooth 3-manifold $M.$

\subsection*{Organization of the paper} In Section~\ref{sect:centralizer}, we consider  centralizers of hyperelliptic involutions. In Section~\ref{sect:2-groups}, we prove several properties of finite 2-groups containing a hyperelliptic involution whose fixed-point set has $r>2$ components.   In Section~\ref{sect:simple-groups} we prove Theorem~\ref{th:simple}. Finally, in Section \ref{sect:odd-components} we consider arbitrary groups containing a hyperelliptic involution whose fixed-point set has $r$ components with $r>2$ and $r$  odd.

\vskip1em

\section{Centralizers of hyperelliptic involutions}\label{sect:centralizer}

In this section we collect  some properties about the elements of $G$ commuting with a hyperelliptic involution,  in particular the centralizer of a hyperelliptic involution is of very special type.

\begin{remark}\label{rem:stabilizer-simple-closed-curve}

Let $K$ be a simple closed curve  in $M$ and let   $I$ be the subgroup of $G$ fixing $K$ setwise.
Since $G$ is finite,  we can choose a $G$-invariant Riemannian metric on $M$ with respect to which $G$ acts by isometries.
An  element $f$ of $I$ can act on $K$ either as rotation  or as a reflection.
In the first case we call $f$ a $K$-rotation, while in the second case  a $K$-reflection.
The elements acting trivially on $K$ are considered $K$-rotations.
The subgroup of $K$-rotations is abelian of rank at most two and its index in $I$ is at most two.
The  conjugate  of  a $K$-rotation by a $K$-reflection is the  inverse of the $K$-rotation. 
We can conclude that  $I$ is isomorphic to a subgroup of a semidirect product
$\mathbb{Z}_2 \ltimes (\mathbb{Z}_n\times\mathbb{Z}_m)$, for some
non-negative integers $n$ and $m$, where $\mathbb{Z}_2$ operates on the normal
subgroup $\mathbb{Z}_n\times\mathbb{Z}_m$
by sending each element to its inverse.

\end{remark}

\begin{Proposition}\label{prop:centralizer} If $h\in G$ is a hyperelliptic involution then the quotient $C_G(h)/\langle h \rangle$ is isomorphic to a finite subgroup of $\SO4.$

\end{Proposition}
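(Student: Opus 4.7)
The plan is to realize $C_G(h)/\langle h\rangle$ as a finite group of smooth, orientation-preserving diffeomorphisms of $S^3$ and then invoke the Spherical Space Form Theorem (a consequence of the positive resolution of the Geometrization Conjecture) to embed it into $\SO4$.

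Since $h$ has order two, every $g\in C_G(h)$ normalizes $\langle h\rangle$, and therefore descends to a diffeomorphism of the quotient $M/\langle h\rangle\cong S^3$. This defines a homomorphism $\varphi\colon C_G(h)\to \mathrm{Diff}(S^3)$. To identify its kernel, suppose $\varphi(g)=\mathrm{id}$, so that $gx\in\{x,hx\}$ for every $x\in M$. Put $A=\{x:gx=x\}$ and $B=\{x:gx=hx\}$; both are closed and $A\cup B=M$. On the open set $U=M\setminus\mathrm{Fix}(h)$, which is connected because $\mathrm{Fix}(h)$ has codimension two in the connected 3-manifold $M$, the sets $A\cap U$ and $B\cap U$ are disjoint and closed in $U$, so one of them is empty. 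By continuity, $g=\mathrm{id}$ or $g=h$ on all of $M$, hence $\ker\varphi=\langle h\rangle$ and $C_G(h)/\langle h\rangle$ embeds into $\mathrm{Diff}(S^3)$.

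Next I check that this induced action is orientation-preserving. Because $h$ preserves the orientation of $M$, the orientation descends to the quotient: near a point of $\mathrm{Fix}(h)$ the action is locally modeled by $(z,t)\mapsto(-z,t)$ on $D^2\times\R$, and the associated branched quotient map $(z,t)\mapsto(z^2,t)$ is orientation-preserving. Since every element of $G$ preserves the orientation of $M$, the image of $\varphi$ consists of orientation-preserving diffeomorphisms of $S^3$.

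The conclusion then follows from the Spherical Space Form Theorem (a consequence of Thurston--Perelman Geometrization): every finite smooth orientation-preserving group action on $S^3$ is smoothly conjugate to an orthogonal one, so $C_G(h)/\langle h\rangle$ is isomorphic to a subgroup of $\SO4$. The only deep ingredient is this last appeal to Geometrization; the preceding steps are routine topological checks, the main subtlety being that the quotient inherits a well-defined smooth and oriented structure at the branch locus, which is visible from the local model described above.
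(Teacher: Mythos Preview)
Your proof is correct and follows essentially the same approach as the paper: realize $C_G(h)/\langle h\rangle$ as a finite group of orientation-preserving diffeomorphisms of $M/\langle h\rangle\cong S^3$ and then invoke geometrization (Thurston's orbifold theorem together with Perelman's resolution of the spherical space form conjecture) to conclude that the action is conjugate to a subgroup of $\SO4$. The paper's proof simply asserts these steps in two sentences, whereas you have supplied the routine verifications (that the kernel of the induced action is exactly $\langle h\rangle$ and that the action is orientation-preserving) which the paper takes for granted.
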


\begin{proof} The group $C_G(h)/\langle h \rangle$ acts on  $M/\langle h \rangle$ that is homeomorphic to $S^3$.  The Thurston orbifold geometrization theorem (see~\cite{boileau-leeb-porti}) and  the spherical space form  conjecture for free actions on $S^3$  proved by Perelman (see~\cite{morgan-tian}) imply that every finite group of diffeomorphisms of the 3-sphere is conjugate to a finite subgroup of $\textrm{SO}(4)$.
\end{proof}

The two following technical lemmas concerning elements commuting with hyperelliptic involutions are used several times in the paper.

\begin{Lemma}\label{lem:two-commuting-hyperelliptic}
Let $h_1, h_2$ be two commuting hyperelliptic involutions of $G$ and $E=\langle h_1, h_2\rangle$. Then  $\CC_G(E)/\langle h_1 \rangle$ is isomorphic to a subgroup of $\mathbb{Z}_2 \ltimes (\mathbb{Z}_n\times\mathbb{Z}_m)$, for some
non-negative integers $n$ and $m$, where $\mathbb{Z}_2$ operates on the normal
subgroup $\mathbb{Z}_n\times\mathbb{Z}_m$ by sending each element to its inverse.

\end{Lemma}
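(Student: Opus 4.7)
The plan is to push everything into the quotient $S^3=M/\langle h_1\rangle$ via Proposition~\ref{prop:centralizer}, and then reduce the statement to the setting of Remark~\ref{rem:stabilizer-simple-closed-curve}, with the distinguished simple closed curve being the fixed-point set of the involution of $S^3$ induced by $h_2$.

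Assuming $h_1\neq h_2$ (the degenerate equal case is not really contemplated by the hypothesis of two commuting hyperelliptic involutions), the involution $h_2$ descends to a non-trivial involution $\bar h_2$ on $S^3=M/\langle h_1\rangle$. Since the non-empty set $\mathrm{Fix}(h_2)$ projects into $\mathrm{Fix}(\bar h_2)$, the latter is non-empty. Proposition~\ref{prop:centralizer} identifies the action of $\CC_G(h_1)/\langle h_1\rangle$ on $S^3$ with (the conjugate of) a linear action of a finite subgroup of $\SO4$; under this identification $\bar h_2$ is an orientation-preserving linear involution of $S^3$ with non-empty fixed set, hence conjugate to a $\pi$-rotation about a great circle. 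In particular $K:=\mathrm{Fix}(\bar h_2)$ is a single simple closed curve in $S^3$.

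Next, the inclusion $\CC_G(E)\subseteq \CC_G(h_1)$ gives a natural injection $\CC_G(E)/\langle h_1\rangle\hookrightarrow \CC_G(h_1)/\langle h_1\rangle$; since each element of $\CC_G(E)$ commutes with $h_2$, its image in the quotient commutes with $\bar h_2$ and therefore preserves $K$ setwise. The argument of Remark~\ref{rem:stabilizer-simple-closed-curve} carries over verbatim to the smooth action of the finite group $\CC_G(h_1)/\langle h_1\rangle$ on the 3-manifold $S^3$: the stabilizer of any simple closed curve is isomorphic to a subgroup of $\Z_2\ltimes(\Z_n\times\Z_m)$ with $\Z_2$ acting by inversion, for some non-negative integers $n,m$. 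Since $\CC_G(E)/\langle h_1\rangle$ sits inside this stabilizer, the lemma follows.

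The only delicate point is checking that $\bar h_2$ has a connected one-dimensional fixed set, so that Remark~\ref{rem:stabilizer-simple-closed-curve} can be applied; this is taken care of by the linear normal form produced by Proposition~\ref{prop:centralizer} (or, alternatively and independently, by Smith theory, which already forces an orientation-preserving involution of $S^3$ to have fixed set a $\Z_2$-homology circle). Everything else is a routine passage to the quotient and a re-application of the general lemma on circle stabilizers.
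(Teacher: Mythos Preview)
Your proof is correct and follows essentially the same route as the paper: pass to the quotient $M/\langle h_1\rangle\cong S^3$, observe that $\bar h_2$ has a simple closed curve as fixed set, and apply Remark~\ref{rem:stabilizer-simple-closed-curve} to the setwise stabilizer $\CC_G(E)/\langle h_1\rangle$. The only difference is that you spell out, via Proposition~\ref{prop:centralizer} (or Smith theory), why $\mathrm{Fix}(\bar h_2)$ is a single circle, whereas the paper asserts this directly.
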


\proof
The group $\CC_G(E)/\langle h_1 \rangle$ acts on the $3$-sphere $M/\langle h_1 \rangle$ and contains in its center  $h_2\langle h_1 \rangle.$ Since $h_2$ has non-empty fixed-point set on $M$,  the fixed-point set of  $h_2\langle h_1 \rangle$ on $M/\langle h_1 \rangle$  is a simple closed curve fixed by $C_G(E)/\langle h_1 \rangle$ setwise. The claim follows from Remark~\ref{rem:stabilizer-simple-closed-curve}.\qed

\begin{Lemma}\label{lem:r-components}\cite[Lemma 1]{reni-zimmermann2001}
Let $h$ be a hyperelliptic involution in $G$, 
and suppose that the fixed-point set  of $h$ has $r>2$
components. Let $x$ be an element of $G$  different
from $h$ that commutes with $h$.
Then the fixed-point set of $x$ has also $r$ components if and only if $x$
is a strong inversion of  the fixed-point set of $h$ (that is
$x$ acts as a reflection on each component of the fixed-point set of $h$).
\end{Lemma}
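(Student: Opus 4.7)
The plan is to analyse $\mathrm{Fix}(x)$ by passing to the branched covering $\pi\colon M\to M/\langle h\rangle\cong S^3$. Let $L=\pi(\mathrm{Fix}(h))$ be the branch link (with $r$ components) and $\bar x\in C_G(h)/\langle h\rangle$ the image of $x$. By Proposition~\ref{prop:centralizer}, $\bar x$ is conjugate to a finite-order element of $\SO4$, so a direct inspection of SO$(4)$ shows that $\mathrm{Fix}(\bar x)\subset S^3$ is either empty, all of $S^3$, or a single circle. The case $\mathrm{Fix}(\bar x)=S^3$ forces $x\in\langle h\rangle$, which is incompatible with $x\neq h$ (and with $\mathrm{Fix}(x)\neq M$); the case $\mathrm{Fix}(\bar x)=\emptyset$ forces $\mathrm{Fix}(x)\subseteq\pi^{-1}(\emptyset)=\emptyset$. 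Neither is consistent with the biconditional, so I may assume $\mathrm{Fix}(\bar x)=C$ is a circle.

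Since $x$ centralises $h$, $x$ permutes the components of $F=\mathrm{Fix}(h)$; by Remark~\ref{rem:stabilizer-simple-closed-curve}, on each $x$-invariant component $x$ acts as a rotation (either trivially, or without fixed points) or as a reflection (with exactly two fixed points). I would partition the components of $F$ into four classes of sizes $a,b,c,d$ according to whether $x$ fixes the component pointwise ($\AA$), rotates it without fixed points ($\mathcal{B}$), reflects it ($\CC$), or does not preserve it ($\DD$); thus $r=a+b+c+d$. The set $\mathrm{Fix}(\bar x)\cap L$ consists of the $\AA$-components of $L$ together with $2c$ reflection fixed points on the $\CC$-components. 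Since $C$ is a single circle and the components of $L$ are pairwise disjoint, either $a=0$, or $a=1$ with $C$ equal to that $\AA$-component and $c=0$; in the latter subcase $\mathrm{Fix}(x)\subseteq\pi^{-1}(C)$ is contained in a single circle and has at most one component, which is incompatible with either direction of the biconditional (forward needs $r>2$ components; backward needs $c=r>2$). Hence $a=0$.

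The crucial step is the local analysis at a point $p\in C\cap L$ with preimage $q\in M$. Linearising $\bar x$ at $p$ yields a rotation of the normal plane to $C$; preserving the transverse tangent direction of $L$ at $p$ forces the rotation angle to be $\pi$, so $\bar x$ acts as an involution in a neighbourhood of $p$. In local coordinates where $\pi$ is modelled by $(\zeta,w)\mapsto(\zeta^2,w)$, with $L=\{\zeta=0\}$ and $C$ the real $\zeta$-axis, the two lifts of $\bar x\colon(z,w)\mapsto(\bar z,-w)$ are $(\zeta,w)\mapsto(\pm\bar\zeta,-w)$; both fix $q$, and each has as its fixed set one of the two coordinate $\zeta$-axes. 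Thus locally $\pi^{-1}(C)$ is the union of two smooth arcs crossing transversally at $q$, one being $\mathrm{Fix}(x)$ and the other $\mathrm{Fix}(xh)$. Globally, $\pi^{-1}(C)=\mathrm{Fix}(x)\cup\mathrm{Fix}(xh)$, each summand is a smooth $1$-submanifold, and they cross transversally at the $2c$ preimages of the points of $C\cap L$.

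Finally I count the components of $\mathrm{Fix}(x)$. Above each of the $2c$ arcs of $C\setminus(C\cap L)$, the two lifts are interchanged by $h$, and by $h$-equivariance of $\mathrm{Fix}(x)$ both lifts belong either entirely to $\mathrm{Fix}(x)$ or entirely to $\mathrm{Fix}(xh)$ (a transition would produce a point of $F$ on an arc disjoint from $F$, which is impossible). The local fold structure then shows that this Fix$(x)$/Fix$(xh)$-status alternates across each branch point, so exactly $c$ of the $2c$ base arcs carry Fix$(x)$-lifts; the pair of lifts of each such arc closes up through its two endpoint branch points into one circle of $\mathrm{Fix}(x)$, and no component of $\mathrm{Fix}(x)$ disjoint from the branch points can exist (since, when $C\cap L\neq\emptyset$, the unbranched part of $\pi^{-1}(C)$ is a disjoint union of arcs). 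Together with $a=0$, this gives that $\mathrm{Fix}(x)$ has exactly $c$ components, so $\mathrm{Fix}(x)$ has $r$ components if and only if $c=r$, i.e.\ if and only if $b=d=0$, which is precisely the strong inversion condition. The hard part will be the local branched-cover analysis establishing the transverse splitting $\pi^{-1}(C)=\mathrm{Fix}(x)\cup\mathrm{Fix}(xh)$ and the alternation of the Fix$(x)$/Fix$(xh)$-status across the branch points.
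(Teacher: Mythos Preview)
The paper does not supply its own proof of this lemma: it is stated with the citation \cite[Lemma~1]{reni-zimmermann2001} and used as a black box thereafter. So there is no ``paper's proof'' to compare against; what follows is an assessment of your argument on its own merits.

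Your approach---pushing everything down to $S^3=M/\langle h\rangle$, analysing the single fixed circle $C$ of $\bar x$, and reconstructing $\mathrm{Fix}(x)$ from the branched preimage $\pi^{-1}(C)$---is the natural one and is essentially correct. The key identity $\pi^{-1}(C)=\mathrm{Fix}(x)\cup\mathrm{Fix}(xh)$, the local branched-cover model forcing the normal rotation of $\bar x$ to be by $\pi$ at points of $C\cap L$, and the resulting alternation of the $\mathrm{Fix}(x)$/$\mathrm{Fix}(xh)$ labels along the $2c$ arcs of $C\setminus L$ are all sound, and they do yield that $\mathrm{Fix}(x)$ has exactly $c$ components when $c>0$.

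Two small points to tighten. First, your phrase ``neither is consistent with the biconditional'' (used for $\mathrm{Fix}(\bar x)=\emptyset$, for $\mathrm{Fix}(\bar x)=S^3$, and for the subcase $a=1$, $c=0$) is misleading: what you mean is that in those cases \emph{both} sides of the equivalence are false, so the biconditional holds vacuously---say that instead. Second, your component count ``$\mathrm{Fix}(x)$ has exactly $c$ components'' is derived only under the standing assumption $C\cap L\neq\emptyset$, i.e.\ $c>0$; you should also dispose of the case $a=0$, $c=0$ explicitly. There $\pi^{-1}(C)\to C$ is an unbranched double cover, so $\pi^{-1}(C)$ has at most two components and hence $\mathrm{Fix}(x)$ has at most two; since $r>2$ and since $c=0$ rules out the strong-inversion condition, the equivalence again holds trivially. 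With these two clarifications the argument is complete.
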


\section{Finite 2-groups containing hyperelliptic involutions}\label{sect:2-groups}

In this section we analyse geometric and algebraic properties of 2-groups acting on a 3-manifold and containing a hyperelliptic involution whose fixed point set has $r>2$ components. If $g$ is an element of $G$, we denote by $Fix(g)$ the fixed-point set of $g.$

\begin{Lemma}\label{lem:elementary-abelian-group}
Suppose that $G$ is an elementary abelian $2$-group. If $G$ contains a hyperelliptic involution $h$ such that $Fix(h)$  has $r>2$ components, then $G$ contains at most three hyperelliptic involutions which generate a subgroup of rank at most two. If the group generated by the hyperelliptic involutions has rank two, then it contains all the elements of $G$ whose fixed-point set has $r$-components.
\end{Lemma}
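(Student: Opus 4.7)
The plan is to reduce both assertions to a single key claim: if $h_1,h_2\in G$ are hyperelliptic involutions and $g\in G\setminus\langle h_1,h_2\rangle$ satisfies that $Fix(g)$ has $r$ components, then a contradiction follows. Granting this, three hyperelliptic involutions must lie in a common rank-two subgroup (apply the claim with $g$ the third one), so the hyperelliptic involutions of $G$ number at most three and generate a subgroup of rank at most two; similarly, if the hyperelliptic subgroup is a rank-two $\langle h_1,h_2\rangle$, any further $r$-component element outside it would contradict the claim.

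To prove the claim, first apply Lemma \ref{lem:r-components} with the hyperelliptic $h_1$: since $Fix(g)$ has $r$ components, $g$ acts as a strong inversion on $Fix(h_1)$; the same holds for $h_1g$, which coincides with $g$ on $Fix(h_1)$, so $Fix(h_1g)$ also has $r$ components. Applying the lemma once more with the hyperelliptic $h_2$, we get that $g$ and $h_1g$ are strong inversions on $Fix(h_2)$, so each fixes exactly two points on each of the $r$ circle components of $Fix(h_2)$. Defining
\[
A_1:=Fix(h_2)\cap Fix(g),\qquad A_2:=Fix(h_2)\cap Fix(h_1g),
\]
we have $|A_1|=|A_2|=2r$. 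Moreover $Fix(\langle h_1,h_2,g\rangle)=\emptyset$, since a common fixed point $x$ would provide a faithful orientation-preserving linear action of $(\mathbb{Z}_2)^3$ on $T_xM\cong\mathbb{R}^3$, whereas the maximal elementary abelian $2$-subgroup of $\mathrm{SO}(3)$ has rank two. Hence $A_1\cap A_2=\emptyset$ and $A_1\cup A_2\subseteq M\setminus Fix(h_1)$, giving $|A_1\sqcup A_2|=4r$.

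Next pass to the quotient $p\colon M\to M/\langle h_1\rangle\cong S^3$. By Proposition \ref{prop:centralizer}, $\langle h_1,h_2,g\rangle/\langle h_1\rangle\cong(\mathbb{Z}_2)^2$ acts on $S^3$ as a subgroup of $\mathrm{SO}(4)$. Two commuting involutions in $\mathrm{SO}(4)$ can be simultaneously diagonalized in $\mathrm{O}(4)$, so the subgroup is conjugate into the rank-three diagonal subgroup of $\mathrm{SO}(4)$, giving two configurations up to conjugacy: $(I)$ one non-identity element equals $-\mathrm{Id}$ (acting freely on $S^3$) while the other two fix disjoint Hopf-linked unknots; or $(II)$ all three non-identity elements fix unknotted circles, any two meeting in the same two common points, so $|Fix(\bar h_2)\cap Fix(\bar g)|=2$.

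In case $(I)$, $\overline{h_2g}$ acts freely on $S^3$, forcing $Fix(h_2g)=Fix(h_1h_2g)=\emptyset$ in $M$, but $\emptyset\neq A_1\subseteq Fix(h_2g)$, a contradiction. In case $(II)$, the two points of $Fix(\bar h_2)\cap Fix(\bar g)$ lie outside $p(Fix(h_1))$ (otherwise their unique preimages would lie in $Fix(\langle h_1,h_2,g\rangle)=\emptyset$), so the $p$-preimage of $Fix(\bar h_2)\cap Fix(\bar g)$ consists of at most four points; yet $A_1\sqcup A_2\subseteq p^{-1}(Fix(\bar h_2)\cap Fix(\bar g))$, giving $4r\leq 4$, which contradicts $r>2$. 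The main obstacle is the geometric dichotomy $(I)$–$(II)$ for $(\mathbb{Z}_2)^2\subset\mathrm{SO}(4)$ acting on $S^3$; once this is in place, the argument closes through elementary counting with the strong-inversion conditions and the two-to-one fiber bound for $p$.
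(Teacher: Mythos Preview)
Your proof is correct and takes a genuinely different route from the paper's. The paper stays entirely in $M$: it fixes a component $K$ of $Fix(h)$, uses Lemma~\ref{lem:r-components} to see that every involution with $r$-component fixed set leaves $K$ invariant, applies the rotation/reflection description of the setwise stabiliser (Remark~\ref{rem:stabilizer-simple-closed-curve}) to bound the rank of $G$ by three, and then rules out the rank-three case by a short combinatorial analysis of which $K$-reflections can have $r$-component fixed sets. By contrast, you pass to the quotient $M/\langle h_1\rangle\cong S^3$, invoke Proposition~\ref{prop:centralizer} to linearise the residual $(\mathbb{Z}_2)^2$-action, classify its two possible configurations in $\mathrm{SO}(4)$, and finish with a point count comparing the $4r$ intersection points upstairs against the at most four preimages of the two fixed points downstairs. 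The paper's approach is more elementary in that it avoids geometrization entirely (no appeal to Proposition~\ref{prop:centralizer}); your approach is more global and makes the hyperelliptic quotient do the structural work, which nicely illustrates how the $S^3$-model constrains the action. One small point you should make explicit: in case~(I) the freely acting element of the Klein four-group must be $\overline{h_2g}$, because $\bar h_2$ and $\bar g$ visibly have non-empty fixed-point sets in $S^3$ (they contain the images of $Fix(h_2)$ and $Fix(g)$, respectively).
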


\proof
We denote by $K$ one of the components of $Fix(h)$.
We can suppose that $G$ is generated by involutions  whose fixed-point set has $r$-components. By Lemma~\ref{lem:r-components} each element in $G$ leaves $K$ invariant. Remark~\ref{rem:stabilizer-simple-closed-curve} implies that $G$ has rank at most three. If $G$ has rank one or two the thesis trivially holds, thus we can suppose that $G$ has rank three.

Since  $G$ has rank three, $G$ contains also two involutions different from $h$  acting on $K$ as rotations; if we denote one of these rotations by $y$, the other one is $hy$. By  Lemma~\ref{lem:r-components}, the fixed-point set of $y$ and $hy$ can not have $r$ components. 
If $h$  is the only hyperelliptic involution in $G$ the thesis trivially holds, thus we can assume the existence of $t$, a hyperelliptic involution different from $h.$ By  Lemma~\ref{lem:r-components}   hyperelliptic involutions  in $G$ distinct from $h$  are $K$-reflections. 
The $K$-reflections in $G$ are $t$, $th$, $ty$ and $thy$. If the fixed-point set of $ty$ had $r$-components (in particular if it was hyperelliptic), then $ty$ and $y$  should be  strong inversions of $Fix(t)$ but this is impossible since $Fix(y)$ does not have $r$ components. The same holds for $thy.$ This implies that all the involutions whose fixed-point set has $r$ components are contained in the group generated by $t$ and $h$; the hyperelliptic involutions  are at most three.
\qed

\begin{Lemma}\label{lem:central-element}
Suppose that $G$ is a 2-group containing a hyperelliptic involution $h$ such that $Fix(h)$  has $r>2$ components, then there exists $x$ an involution (not necessarily hyperelliptic) such that $G=C_G(x)$ and $Fix(x)$ has $r$ components.
\end{Lemma}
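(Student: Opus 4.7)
The plan is to proceed by induction on $|G|$. For $|G|=2$ the conclusion is immediate (take $x=h$), and if $h\in Z(G)$ then again $x=h$ works; hence I may assume $h\notin Z(G)$ and $|G|>2$. Throughout, let $T$ denote the set of involutions of $G$ whose fixed-point set has $r$ components.

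The first step is to extract from Lemma~\ref{lem:r-components} the following closure property: if $t_{1},t_{2}\in T$ commute and are distinct, then $t_{1}t_{2}$ also belongs to $T$. Indeed, Lemma~\ref{lem:r-components} applied to $t_{2}$ and $t_{1}$ says that $t_{2}$ is a strong inversion of $\mathrm{Fix}(t_{1})$; since $t_{1}$ acts trivially on $\mathrm{Fix}(t_{1})$, the product $t_{1}t_{2}$ acts on $\mathrm{Fix}(t_{1})$ exactly as $t_{2}$ does, hence still as a strong inversion, and Lemma~\ref{lem:r-components} then gives $t_{1}t_{2}\in T$. Consequently, in every abelian subgroup $B\leq G$ the subset $(B\cap T)\cup\{1\}$ is an elementary abelian subgroup of $B$.

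For the main construction, let $L=\langle h^{G}\rangle$ be the normal closure of $h$ in $G$; then $L$ is a normal $2$-subgroup of $G$ containing $h$. I would first verify that $L\subsetneq G$: if $L=G$, every element of the abelianization $G/[G,G]$ is a power of the image $\bar h$, so $|G/[G,G]|\leq 2$; since $G$ is a nontrivial $2$-group the abelianization has order exactly $2$, and the Burnside basis theorem then forces $G$ to be cyclic. But a cyclic $2$-group generated by an involution has order $2$, contradicting $|G|>2$. The inductive hypothesis therefore applies to the proper subgroup $L$ (which still contains the hyperelliptic involution $h$ with $r$-component fixed-point set), producing an involution $y\in Z(L)\cap T$.

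By the closure observation, $V=(Z(L)\cap T)\cup\{1\}$ is an elementary abelian subgroup of $Z(L)$, and $V$ is normal in $G$ because $Z(L)$ is characteristic in the normal subgroup $L$ while $T$ is invariant under conjugation. The conjugation action of the $2$-group $G$ on the nontrivial $\mathbb{F}_{2}$-vector space $V$ has a nontrivial fixed subspace, by the standard fact that a $p$-group acting on a nonzero $\mathbb{F}_{p}$-vector space has nonzero fixed points; any nontrivial element of $V\cap Z(G)$ is then an involution in $Z(G)$ whose fixed-point set has $r$ components, and I take it as $x$. The main obstacle is the verification that $L$ is a proper subgroup of $G$, which relies on the Burnside basis theorem together with the elementary observation that a cyclic $2$-group generated by an involution has order $2$; once $L\subsetneq G$ is in hand, the closure property and the standard $2$-group fixed-point argument combine directly to complete the induction.
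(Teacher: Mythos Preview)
Your inductive strategy is natural, but the closure property you assert in the first step is not justified and is in fact false in general. Lemma~\ref{lem:r-components} requires the anchor involution to be \emph{hyperelliptic}, not merely to have an $r$-component fixed-point set; you cannot invoke it with an arbitrary $t_{1}\in T$ in place of $h$. Worse, the closure genuinely fails: if $h$ is hyperelliptic and $t_{1},t_{2}\in T$ are distinct, commute with one another and with $h$, and are both different from $h$, then by Lemma~\ref{lem:r-components} each $t_{i}$ is a strong inversion of $\mathrm{Fix}(h)$, so the product $t_{1}t_{2}$ acts as a rotation on every component of $\mathrm{Fix}(h)$ and is therefore \emph{not} a strong inversion; the same lemma then gives $t_{1}t_{2}\notin T$ whenever $t_{1}t_{2}\neq h$. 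Hence $(Z(L)\cap T)\cup\{1\}$ need not be a subgroup, and your $\mathbb{F}_{2}$-fixed-point argument on $V$ collapses. Lemma~\ref{lem:elementary-abelian-group} does not rescue this: it forces the $T$-elements into a rank-two subgroup only when the ambient elementary abelian group contains at least \emph{two} hyperelliptic involutions, and you have not shown that $Z(L)$ contains even one. (A minor aside: in your argument that $L\subsetneq G$, Burnside gives that $G$ is cyclic, hence abelian, hence $h\in Z(G)$ --- not that $G=\langle h\rangle$; the conclusion is still correct.)

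The paper's proof is direct rather than inductive. Assuming $h\notin Z(G)$, one picks $f\in N_{G}(C_{G}(h))\setminus C_{G}(h)$ (available in a $2$-group) and sets $t=fhf^{-1}$, a second hyperelliptic involution lying in $Z(C_{G}(h))$. With two hyperelliptic involutions present, Lemma~\ref{lem:elementary-abelian-group} forces the subgroup of $C_{G}(h)$ generated by all $T$-involutions to be exactly $N=\langle h,t\rangle\cong\mathbb{Z}_{2}\times\mathbb{Z}_{2}$. One then shows $x=th$ is central in $G$: since $f$ has $2$-power order and normalises $N$, it must fix $th$, so $C_{G}(th)\supsetneq C_{G}(h)$; and any $g$ normalising $C_{G}(th)$ sends $th$ into $N$, where the options $gthg^{-1}\in\{h,t\}$ would force $C_{G}(th)=C_{G}(h)$, leaving only $gthg^{-1}=th$. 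Finally $th$ is a strong inversion of $\mathrm{Fix}(h)$, so $th\in T$ by Lemma~\ref{lem:r-components}. The key difference from your approach is that the paper never needs closure of $T$ under products: it produces the required central element explicitly as the product of two specific hyperelliptic involutions.
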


\proof

This Lemma is implied by the proof of Theorem 1 in \cite{reni-zimmermann2001}. We give here the proof for the sake of completeness.


Suppose that $h$ is not central in $G$ and define $N$ the subgroup of $C_G(h)$ generated by all involutions whose fixed-point set has exactly $r$ components. By Lemma~\ref{lem:r-components}, the non-trivial elements of $N$ act as reflections on $Fix(h)$, thus  $N$ fixes setwise all the components of $Fix(h)$ and Remark~\ref{rem:stabilizer-simple-closed-curve} applies to $N$.  Since $C_G(h)$ is a proper subgroup of  $G$, by \cite[Theorem 1.6.]{suzuki1} there exists an element $f$ of $G\setminus C_G(h)$ normalizing $C_G(h)$. The element $t:=fhf^{-1}$  is a hyperelliptic involution commuting with $h$, thus $t$ is in $N$. 
Moreover $t$ is central in $C_G(h)$ and by Remark~\ref{rem:stabilizer-simple-closed-curve}, the subgroup  $N$ is  elementary abelian of  rank two or three. 
 

By Lemma~\ref{lem:elementary-abelian-group} we obtain that $N\cong\mathbb{Z}_2 \times \mathbb{Z}_2$ and we will show that $G=C_G(th)$. We remark that $f$ normalizes $N$ and, since its order is a power of two, it centralizes  $th$. We obtain that $C_G(th)$ contains properly $C_G(t)=C_G(h).$

Denote by $g$ an element normalizing $C_G(th)$; the element $gthg^{-1}$ is in the center of $C_G(th)$, so it commutes with $t$ and $h$. The fixed point set of $gthg^{-1}$ has $r$ components, thus $gthg^{-1}$ is contained in $N$. If  $gthg^{-1}=h$ or $gthg^{-1}=t$, we obtain $C_G(th)=C_G(t)=C_G(h)$ getting a contradiction. The only  possibility is $gthg^{-1}=th$. By  \cite[Theorem 1.6.]{suzuki1} we obtain $G=C_G(th).$ The fixed-point set of $th$ has $r$ components, for $th$ is a strong inversion  of $Fix(h)$; this concludes the proof of the Lemma. 

\qed

We remark that in this paper we include between the dihedral groups the elementary abelian group of rank 2.

\begin{Proposition}\label{prop:generate-dihedral}
Suppose that $G$ is a 2-group containing a hyperelliptic involution $h$ such that $Fix(h)$  has $r>2$ components.   If $H$ is the subgroup of $G$ generated by the hyperelliptic involutions in $G$, then $G$ either has order two or is dihedral. 
\end{Proposition}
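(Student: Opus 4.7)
The plan is to fix a central involution $x\in G$ with $r$-component fixed set via Lemma~\ref{lem:central-element}, observing from the proof of that lemma that $x\in H$ (since $x$ equals either the starting hyperelliptic $h$ itself or a product $th$ of two commuting hyperelliptic involutions, each in $H$), and then to exploit the geometry of a chosen component $K$ of $Fix(x)$. By Lemma~\ref{lem:r-components}, every hyperelliptic involution of $G$ different from $x$ strongly inverts $Fix(x)$ and so acts on $K$ as a reflection; since $x$ itself fixes $K$ pointwise, every hyperelliptic involution stabilizes $K$ setwise, and consequently $H$ embeds, via Remark~\ref{rem:stabilizer-simple-closed-curve}, into $\mathbb{Z}_2\ltimes(\mathbb{Z}_n\times\mathbb{Z}_m)$ with the $\mathbb{Z}_2$ acting by inversion on the rotation part.

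Let $R_H$ denote the subgroup of $K$-rotations in $H$. If every hyperelliptic generator of $H$ is a $K$-rotation then each such generator equals $x$ (the only hyperelliptic $K$-rotation), giving $H=\langle x\rangle$ of order two. Otherwise pick a hyperelliptic $K$-reflection $t\in H$; then $H=R_H\rtimes\langle t\rangle$ with $t$ inverting $R_H$, and it remains to prove that $R_H$ is cyclic, from which $H$ is dihedral by construction. The subgroup of $G$ fixing $K$ pointwise embeds into $\mathrm{SO}(2)$ via its action on the normal fibre, hence is cyclic with unique involution $x$, so any hypothetical involution of $R_H$ different from $x$ must act on $K$ as the antipodal rotation.

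Suppose such an $r_0\in R_H\setminus\langle x\rangle$ exists. The goal is to produce two commuting hyperelliptic $K$-reflections $t_a,t_b\in H$ whose product lies in $R_H\setminus\langle x\rangle$; granted this, the subgroup $E=\langle x,t_a,t_b\rangle$ is elementary abelian of rank three because $t_at_b$ acts on $K$ as a non-trivial rotation while $x$ fixes $K$ pointwise, forcing $x\notin\langle t_a,t_b\rangle$. Applying Lemma~\ref{lem:elementary-abelian-group} to $E$, the subgroup of $E$ generated by its hyperelliptic involutions has rank at most two, and as it already contains the independent hyperelliptic involutions $t_a$ and $t_b$ it equals $\langle t_a,t_b\rangle$; by the second assertion of the same lemma this subgroup must contain every element of $E$ with $r$-component fixed set, in particular $x$, contradicting $x\notin\langle t_a,t_b\rangle$. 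Hence no such $r_0$ exists, $R_H$ is cyclic, and $H$ is dihedral.

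The main obstacle is precisely constructing the pair $t_a,t_b$ from the mere existence of $r_0$: the naive candidate $t_b:=t_ar_0$ is automatically a reflection commuting with $t_a$ and satisfies $t_at_b=r_0\notin\langle x\rangle$, but it need not a priori be hyperelliptic. Overcoming this seems to require working with a shortest expression of $r_0$ as a product of hyperelliptic involutions and using the structure of the dihedral image of $H$ in $O(2)$ acting on $K$ (together with the fact that the kernel of this action consists of rotations around $K$ whose unique involution is $x$) to adjust the factorization until both factors are hyperelliptic.
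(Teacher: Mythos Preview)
Your setup coincides with the paper's: obtain the central involution $x$ with $r$-component fixed set via Lemma~\ref{lem:central-element}, note that every hyperelliptic involution other than $x$ is a $K$-reflection for a fixed component $K\subset Fix(x)$, embed $H$ into $\mathbb{Z}_2\ltimes(\mathbb{Z}_{2^a}\times\mathbb{Z}_{2^b})$, and reduce to showing that the rotation subgroup $R_H$ is cyclic. Up to this point the argument is correct and essentially identical to the paper's.

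The proof is, however, genuinely incomplete at exactly the point you flag. You need two hyperelliptic $K$-reflections $t_a,t_b$ whose product is a rotation involution outside $\langle x\rangle$, and you have not produced them. The ``shortest expression of $r_0$ as a product of hyperelliptic involutions'' idea is not developed, and there is no evident mechanism forcing such a minimal word to have length two; a priori the hyperelliptic generators could multiply through long words in which consecutive products are reflections or high-order rotations. The paper fills this gap by a different and much simpler device: \emph{conjugacy inside $H$}. Writing $R_H=\langle g\rangle\times\langle f\rangle$ with $|g|=2^a$, $|f|=2^b$ and $a,b\geq 1$ (this is precisely the hypothesis that $R_H$ is not cyclic), one has four $H$-conjugacy classes of reflections, determined by the parities of the exponents of $f$ and $g$. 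Since conjugates of a hyperelliptic involution are hyperelliptic, the entire conjugacy class of $t$ consists of hyperelliptic reflections. When $a,b>1$ this class contains $t$, $tg^{2^{a-1}}$ and $tf^{2^{b-1}}$, three hyperelliptic involutions generating an elementary abelian group of rank three, contradicting Lemma~\ref{lem:elementary-abelian-group} directly. The case $a=b=1$ is immediate from the same lemma, and the mixed case $a=1$, $b>1$ needs a short extra argument (again combining conjugacy with Lemma~\ref{lem:elementary-abelian-group}) showing that all hyperelliptic reflections already lie in a proper dihedral subgroup, contradicting the definition of $H$. Replacing the unfinished shortest-word strategy with this conjugacy analysis completes your proof.
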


\proof

By  Lemma~\ref{lem:central-element}, the group $G$ contains a central involution $x$ such that $Fix(x)$ has $r$-components.  By Lemma~\ref{lem:r-components} a hyperelliptic involution in $G$ either coincides with $x$ or is a strong inversion of $Fix(x)$, so any hyperelliptic involution leaves invariant each component of $Fix(x)$. Let $K$ be one of the components of $Fix(x)$. A hyperelliptic  involution that is different from $x$ is a  $K$-reflection and each element in $H$ leaves $K$ setwise invariant. By Remark~\ref{rem:stabilizer-simple-closed-curve} we obtain  that $H$ is a subgroup of a semidirect product $\mathbb{Z}_2 \ltimes (\mathbb{Z}_{2^a}\times\mathbb{Z}_{2^b})$,  where $a$ and $b$ are positive integers. 
 
By contradiction, we suppose that $H$ is  neither cyclic nor dihedral.
Since $H$ is not cyclic, it contains $t$  a $K$-reflection that is hyperelliptic. We obtain that $H\cong\mathbb{Z}_2 \ltimes (\mathbb{Z}_{2^a}\times\mathbb{Z}_{2^b})$, with $a\geq 1$ and $b \geq 1$ and $x$ is contained in $H$. We denote by $g$ and $f$ two generators of the subgroup of $K$-rotations, respectively of order $2^a$ and $2^b.$ We recall that  in $H$ we have four conjugacy classes of $K$-reflections: $\{tf^kg^h\,|\, k \text{ odd and } h \text{ even}\},$ $\{tf^kg^h\,|\, k \text{ even and } h \text{ odd}\},$ $\{tf^kg^h\,|\, k \text{ even and } h \text{ even}\}$ and $\{tf^kg^h\,|\, k \text{ odd and } h \text{ odd}\}.$

If $a=b=1$ then $H$ is an elementary abelian group of rank three and Lemma~\ref{lem:elementary-abelian-group} gives a contradiction. 


Then we suppose that  $a>1$ and $b>1$.  
 The elements $t,$ $tg^{2^{a-1}}$ and $tf^{2^{b-1}}$   are all hyperelliptic, for they are conjugate in $H$, and generate an elementary abelian subgroup of rank three. This gives a contradiction by Lemma~\ref{lem:elementary-abelian-group}.

Finally we assume that $a=1$ and $b>1$. Since the elements $t$ and  $tf^{2^{b-1}}$ are conjugate, also $tf^{2^{b-1}}$ is  hyperelliptic. Consider the group generated by $t,$ $f^{2^{b-1}}$ and $g$, which is an elementary abelian group of rank three. By Lemma~\ref{lem:elementary-abelian-group} the only other involution in this group, that can be hyperelliptic, is $f^{2^{b-1}}.$ Thus the elements of type $tf^{k}g$ with $k$ even can not be hyperelliptic. Consider now the group generated by $tf,$ $f^{2^{b-1}}$ and $g$. If $tf$ is hyperelliptic, then so is $tf^{2^{b-1}+1}$.   By Lemma ~\ref{lem:elementary-abelian-group} the involutions in the conjugacy class of $tfg$ can not be hyperelliptic. In this case the group generated by $t$ and $f$ is dihedral and contains all the hyperelliptic involutions. If $tfg$ is hyperelliptic, we can reverse the roles and we obtain that all the hyperelliptic rotations are contained in the dihedral subgroup generated by $t$ and $fg.$

\qed



\begin{Lemma}\label{lem:quotient}
Suppose that $N$ is a 2-subgroup of $G$  generated by hyperelliptic involutions, then the quotient  $M/N$ is homeomorphic to $S^3$.
\end{Lemma}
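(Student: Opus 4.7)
The plan is to induct on $|N|$, using Proposition~\ref{prop:generate-dihedral} to pin down $N$ as either cyclic of order two or dihedral.

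The base case $|N|=2$ is immediate: $N=\langle h\rangle$ for some hyperelliptic involution $h$, and $M/N\cong S^3$ by the very definition of hyperellipticity.

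For the inductive step, I would write $N\cong D_{2n}$ with $n=2^k\geq 2$, pick two hyperelliptic generators $h_1,h_2$ of $N$, and set $r=h_1h_2$ (of order $n$). A short dihedral computation yields $h_2h_1h_2=h_1r^2$, so every $N$-conjugate of $h_1$ has the form $h_1r^{2i}$ and is therefore hyperelliptic; consequently the normal closure of $h_1$ in $N$ is $N_1:=\langle h_1,h_2h_1h_2\rangle=\langle h_1,r^2\rangle$, a proper normal 2-subgroup of index two in $N$ that is generated by hyperelliptic involutions. The inductive hypothesis applied to $N_1$ would then give $M/N_1\cong S^3$. The factor group $N/N_1\cong\mathbb{Z}_2$ now acts on $M/N_1\cong S^3$ through the involution $\tilde h_2$ induced by $h_2$; this $\tilde h_2$ is non-trivial since $h_2=h_1r\notin N_1$, and has non-empty fixed-point set because the image of the non-empty $Fix(h_2)$ lies in $Fix(\tilde h_2)$. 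Arguing as in the proof of Proposition~\ref{prop:centralizer} (Thurston's orbifold geometrization theorem together with Perelman's spherical space form theorem), $\langle\tilde h_2\rangle$ would be conjugate to a subgroup of $\SO4$; an orientation-preserving involution in $\SO4$ with a fixed point is a rotation, whose quotient is $S^3$. This yields $M/N=(M/N_1)/\langle\tilde h_2\rangle\cong S^3$, completing the induction.

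The one thing to watch is that the induced involution $\tilde h_2$ on the topological $S^3$ underlying $M/N_1$ really does sit inside the scope of the $\SO4$-conjugation theorem; this is not a serious obstacle, since the smooth $N$-action on $M$ descends cleanly to an $N/N_1$-action on the underlying manifold of $M/N_1$, placing us exactly in the setting handled by Proposition~\ref{prop:centralizer}.
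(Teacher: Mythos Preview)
Your proof is correct and follows essentially the same approach as the paper. The paper writes out the subnormal series
\[
\langle t\rangle \subset \langle t,\,t(ts)^{2^{a-1}}\rangle \subset \cdots \subset \langle t,(ts)^2\rangle \subset N
\]
explicitly and climbs it step by step, while you package the same climb as an induction on $|N|$; your $N_1=\langle h_1,r^2\rangle$ is exactly the paper's penultimate term. The only cosmetic difference is that the paper appeals to the positive solution of the Smith Conjecture for the step ``involution with non-empty fixed-point set on $S^3$ has quotient $S^3$'', whereas you route through the $\SO4$-conjugation statement of Proposition~\ref{prop:centralizer}; these are equivalent for the purpose at hand.
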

\proof
By Proposition~\ref{prop:generate-dihedral} the group $N$ is either cyclic of order two or dihedral. In the first case the thesis trivially holds. In the second case $N$ is generated by  two hyperelliptic involutions that we denote by  $t$ and $s$. Let $2^a$ be the order of $ts$ (i.e. $2^{a+1}$ is the order of $N$). We consider the following subnormal series of subgroups of $N$: 
$$N_1=\langle t \rangle \subset N_2=\langle N_1, t(ts)^{2^{a-1}} \rangle \subset \langle N_2, t(ts)^{2^{a-2}} \rangle\subset\ldots N_a=\langle N_{a-1}, t(ts)^{2} \rangle\subset N=\langle N_{a}, s \rangle $$
All the involutions in $N$, but one that is central in $N$, are conjugate either to $t$ or to $s$, thus all the involutions, but possibly one, are hyperelliptic.    
Since $t$ is hyperelliptic $M/N_1$ is homeomorphic to $S^3.$ We recall that, by the positive solution of the Smith Conjecture, if an involution acts with non-empty fixed-point set on $S^3$, then its action is linear and the underlying topological space of the quotient orbifold of $S^3$ by this involution  is again a 3-sphere.   Since $N_1$ is normal in $N_2$, the quotient group $N_2/N_1$ induces an action on $M/N_1$. In particular $t(ts)^{2^{a-1}}$ projects to an involution acting faithfully on $M/N_1$ and generating $N_2/N_1$. As the fixed-point set of $t(ts)^{2^{a-1}}$ is non-empty, so is the fixed-point set of the projected involution. We can obtain  $M/N_2$ as a double quotient  $(M/N_1)/(N_2/N_1)$ and, since $N_2/N_1$ is generated by an involution not acting freely, $M/N_2$ is again homeomorphic to $S^3$. 
We can iteratively apply  this argument to each group $N_i$ of the subnormal series, and finally we get the thesis.

\qed

We recall that the 2-rank of a finite group is the maximum of the ranks of  elementary abelian 2-subgroups. 
The following proposition assures us that the 2-rank of a finite group containing a hyperelliptic involution is small.

\begin{Proposition}\label{prop:2-rank}
If  $G$  contains  a hyperelliptic involution $h$ such that $Fix(h)$  has $r>2$ components, then the 2-rank of $G$ is at most four.

\end{Proposition}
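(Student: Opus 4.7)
The plan is to combine three ingredients: the topological fact (Lemma~\ref{lem:quotient}) that the quotient of $M$ by any 2-subgroup generated by hyperelliptic involutions is $S^{3}$; the group-theoretic fact (Proposition~\ref{prop:generate-dihedral}) that in any 2-group such a generated subgroup is cyclic of order two or dihedral; and the classical fact that the 2-rank of $\mathrm{SO}(4)$ is three, realised by the subgroup of diagonal $\pm 1$ matrices of determinant one. Let $E$ be an elementary abelian 2-subgroup of $G$, fix a Sylow 2-subgroup $P$ of $G$ containing $E$, and let $H\leq P$ be the subgroup generated by all hyperelliptic involutions in $P$. Since $h$ lies in some Sylow 2-subgroup and all of them are conjugate, $P$ contains a conjugate of $h$, so $H$ is non-trivial; moreover $H$ is normal in $P$ (conjugation preserves hyperellipticity), Proposition~\ref{prop:generate-dihedral} shows that $H$ is cyclic of order two or dihedral, and Lemma~\ref{lem:quotient} gives $M/H\cong S^{3}$.

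If $|H|=2$, the unique hyperelliptic involution $h'\in P$ is central in $P$ and therefore commutes with $E$, so $E\langle h'\rangle$ is elementary abelian and contained in $C_{G}(h')$. By Proposition~\ref{prop:centralizer}, $C_{G}(h')/\langle h'\rangle$ embeds in $\mathrm{SO}(4)$, whose 2-rank is three; hence $E\langle h'\rangle/\langle h'\rangle$ has rank at most three, giving rank at most four for $E$. If instead $H$ is dihedral of order at least four, I would observe that the induced action of $P/H$ on $M/H\cong S^{3}$ is faithful: the union of fixed-point sets in $M$ of non-trivial elements of $H$ has codimension at least two, so its complement is open, dense and connected, forcing any element of $P$ acting trivially on $M/H$ to agree with an element of $H$ on this dense set and hence to lie in $H$. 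Consequently $P/H$ is conjugate to a subgroup of $\mathrm{SO}(4)$ of 2-rank at most three, so $E/(E\cap H)\cong EH/H$ has rank at most three, while $E\cap H$ is an elementary abelian subgroup of the dihedral 2-group $H$ and hence has rank at most two.

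If $E\cap H$ has rank at most one, we immediately get that $E$ has rank at most four. If $E\cap H$ has rank two, then $E\cap H$ is a Klein four subgroup of $H$; a brief inspection of dihedral 2-groups shows that every such subgroup contains a reflection of $H$ (either $H$ itself is Klein four, or $H$ has a unique central involution $z$ and every Klein four subgroup is $\langle z,s\rangle$ for a reflection $s$). A short conjugacy-class argument then shows that every reflection of $H$ is hyperelliptic: the hyperelliptic involutions form a conjugation-closed generating set of $H$, and if they omitted any conjugacy class of reflections they would generate only a proper dihedral subgroup of $H$. Therefore $E$ contains a hyperelliptic involution $s$, and Proposition~\ref{prop:centralizer} applied to $s$ (with $E\leq C_{G}(s)$) again gives that $E$ has rank at most four. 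The main obstacle I anticipate is the two technical points that make the dihedral case work, namely the faithfulness of the $P/H$-action on $M/H$ and the assertion that every reflection of the dihedral group $H$ is hyperelliptic; both require direct verification but are the linchpins of the case analysis.
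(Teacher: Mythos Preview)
Your proof is correct and follows essentially the same approach as the paper: reduce to a Sylow 2-subgroup, form the normal subgroup $H$ generated by hyperelliptic involutions, use Proposition~\ref{prop:generate-dihedral} and Lemma~\ref{lem:quotient} to bound the rank of $E/(E\cap H)$ via the action on $M/H\cong S^3$, and in the case $\operatorname{rank}(E\cap H)=2$ locate a hyperelliptic involution inside $E$ to finish via the quotient by that involution. You supply more justification than the paper does for the two points you flag (faithfulness of the $P/H$-action and the fact that at most the central involution of $H$ can fail to be hyperelliptic), but these are exactly the details the paper leaves implicit, so the arguments coincide.
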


\proof

We denote by $S$ a Sylow 2-subgroup of $G$ and by $E$ one elementary abelian subgroup of maximal order.  Since the conjugate of a hyperelliptic involution is hyperelliptic, we can suppose that $S$ contains $h$ by the Sylow Theorems. Let $H$ be the normal  subgroup of $S$ generated by all the hyperelliptic involutions in $S$. By Proposition~\ref{prop:generate-dihedral} we obtain that $H$ is cyclic of order two or dihedral. The subgroup  $E$ projects to an elementary abelian group acting on $M/H$, which is isomorphic to $EH/H\cong E/E\cap H.$  Since by Lemma~\ref{lem:quotient} the quotient $M/H$ is homeomorphic to $S^3$, the group $E/E\cap H$ admits a faithful  action on $S^3$ and hence  has rank at most three (see \cite[Propositions 2 and 3]{mecchia-zimmermann2004}.) 
The group $E\cap H$ has rank at most two. If $E\cap H$ is cyclic, we are done. If  $E\cap H\cong \mathbb{Z}_2\times\mathbb{Z}_2$ then $E$ contains a hyperelliptic involution $t$ (at most one involution in $H$ can be non-hyperelliptic).  By using the action induced by $E$ to $M/\langle t \rangle$, which is homeomorphic to $S^3$, we can conclude also in this case that the rank of $E$ is at most four.

\qed

\section{Finite simple groups containing hyperelliptic involutions}\label{sect:simple-groups}

In this section we prove Theorem~\ref{th:simple}. We consider first the following algebraic definitions:

\begin{definition}\label{def:closed-subgroups}
Let $F$ be a finite group with two subgroups $T$ and $C$ such that  $ C\leq  T \leq F$.
\begin{enumerate}
\item  $C$ is said to be \textit{weakly closed} in $T$, if $ C^f \leq T$ implies $ C^f=C$, for each $f \in F$.

\item $C$ is said to be \textit{strongly closed} in $T$, if $ C^f \cap T  \leq C$, for each $f \in F$.

\item Let $I$ be the set of the involutions of $C$; the subgroup $C$ is said to be \textit{strongly involution closed} in $T$, if $I^f \cap T \subset C$, for each $f\in F$.

\end{enumerate}

\end{definition}

If $S$ is a Sylow 2-subgroup of $G$, the subgroup of $S$ generated by hyperelliptic involution is weakly closed in $S$ and   is cyclic or  dihedral by Proposition~\ref{prop:generate-dihedral}.

The groups containing a dihedral 2-subgroup weakly closed in the Sylow 2-subgroup were studied by Fukushima in \cite{fukushima}. We will use his result to strongly restrict the possible simple groups containing  a hyperelliptic involution with fixed-point set with $r>2$ components.

The other two definitions (strongly closed and strongly involution closed subgroups) will be used in the next session where the  case of arbitrary groups will be considered.

In the proof of Theorem~\ref{th:simple} we  will find simple groups with Sylow 2-subgroups that are  dihedral, semidihedral or  wreath products  $\Z_{2^m} \wr \Z_2.$

We recall that  a semidihedral group of order $2^n$ with $n\geq 4$ has the following presentation:
$$\langle s,\,f\,|\, s^2=f^{2^{n-1}}=1,\, sfs=f^{2^{n-2}-1}\rangle.$$    

We remark that in literature these groups  are also called quasidihedral. For the basic properties of semidihedral groups see \cite[Lemma 1]{alperin-brauer-gorenstein}.

The wreath product $\Z_{2^m} \wr \Z_2$ is a semidirect product  $(\Z_{2^m} \times  \Z_{2^m})\rtimes \Z_2$ where $\Z_2$ acts on a given couple of generators of $\Z_{2^m} \times  \Z_{2^m}$ exchanging them.

\vskip1em

\textit{Proof of Theorem~\ref{th:simple}} Let $S$ be a Sylow 2-subgroup of $G$ containing $h$ and let $D$ be the subgroup of $S$ generated by all the hyperelliptic involutions of $S$. If $h$ was the only hyperelliptic involution in $S$, then by Glauberman's $Z^*-$ theorem $h$ would be in the center of $G$, and this is impossible since $G$ is simple. So we can suppose that $G$ contains at least two hyperelliptic involutions.  Then, $D$ is weakly closed and dihedral by Proposition~\ref{prop:generate-dihedral}. By~\cite[Theorem~1]{fukushima}, since $G$ is simple, we get that either $G$ is isomorphic to $PSU(3,4)$, or $S$ is dihedral, semidihedral or a wreath product  $\Z_{2^m} \wr \Z_2$. 

 If $G\cong PSU(3,4)$ there is a unique class of involutions, that should be all hyperelliptic, and  $S$ should contain an elementary abelian subgroup of rank 4 (see~\cite{conway-et-al}); this contradicts Proposition~\ref{prop:generate-dihedral}.

When $S$ is dihedral  we can conclude by using the Gorenstein-Walter classification of  finite simple groups with dihedral Sylow  $2$-subgroups \cite{gorenstein-walter}.

If  $S$ is quasidihedral  we can use the result of Alperin, Brauer and Gorenstein~\cite{alperin-brauer-gorenstein} and we obtain that $G$ is isomorphic to $PSL(3, q)$ for some odd prime power $q$ such that $q\equiv -1 \bmod 4$ or  to $PSU(3, q)$ for some odd prime power  $q$ such that $q\equiv 1 \bmod 4$ or  to $M_{11}$.
To exclude most part of these groups we can use the structure of the centralizer of the involutions which for groups of Lie type is described~\cite[Table 4.5.2.]{gorenstein-lyons-solomon3}. We remark that in these groups there exists a unique conjugacy class of involutions and so each involution is hyperelliptic. By Proposition~\ref{prop:centralizer} and the classification of the finite 
subgroups of $\textrm{SO}(4)$ (see~\cite{conway-smith} or \cite{mecchia-seppi}) we know that a non-abelian  simple section of any centralizer of an involution is isomorphic to $\mathbb{A}_5.$
Analyzing \cite[Table 4.5.2.]{gorenstein-lyons-solomon3} we can conclude that the only possible groups are  $PSL(3, 5),\,PSU(3, 3)$ and $M_{11}.$

Suppose finally that $S$ is a wreath product $\Z_{2^m} \wr \Z_2$. Let $s,t\in S$ be elements of order $2^m$ and $2$, respectively, such that $S_0:=\langle s, s^t\rangle\cong  \Z_{2^m}\times \Z_{2^m}$ and $S=S_0\langle t\rangle$. Note that an element $\mu=s_0t\in S\setminus S_0$ ha order $2$ if and only if $s_0^t=s_0^{-1}$, that is $s_0\in \langle s^{-1}s^t\rangle$. Thus, the involutions in $S\setminus S_0$ are exactly $2^m$ and $S$ contains $2^m+3$ involutions. Since $G$ is simple, by Corollary 2G, Proposition $2\mathrm{F}_a$ and Proposition $2\mathrm{B}_\mathrm{I}$ in~\cite{brauer-wong}, all the involutions of $S$ are conjugate in $G$. Hence they are all hyperelliptic, and they generate a dihedral subgroup by Proposition~\ref{prop:generate-dihedral}, of order at most $2^{m+1}$. It follows that the involutions in $S$ are at most $2^m+1$, a contradiction. \qed

\section{The case of hyperelliptic involutions with fixed-point set with  odd number of components}\label{sect:odd-components}

If we drop  the hypothesis that $G$ is  simple,  the conditions  given by Fukushima's result on the structure of $G$ are too weak to lead us to an effective restriction of the possible groups. On the other hand if we suppose that the number of components of the fixed-point set of  hyperelliptic involutions is odd, we are able to prove the existence of a dihedral 2-subgroup strongly involution  closed and we can use the stronger results in \cite{goldschmidt} and \cite{hall}.

\subsection{Existence of a strongly involution closed dihedral subgroup}

We begin with an algebraic remark.

\begin{remark}\label{rem:normalizer-coprime-quotient} Let $F$ be a finite group. Suppose that $N$ is a normal subgroup of $F$ and $P$ is a $p$-subgroup of $F$. If the order of $N$ is coprime to $p$, then the normalizer of the projection of $P$ to  $F/N$ is the projection of the normalizer of $P$ in $F$, that is $$\NN_{F/N}(PN/N)=(\NN_F(P)N)/N.$$ The inclusion $\supseteq$ holds trivially. We prove briefly the other inclusion. Let $fN$ be an element of $\NN_{F/N}(PN/N)$, then $P^f\subseteq PN$. Both $P^f$ and $P$ are Sylow  $p$-subgroups of $PN$ and by the second Sylow Theorem they are conjugate by an element $tn\in PN.$   We obtain that $P^{ftn}=P$, and hence $f\in \NN_F(P)N.$ By the same  argument we can prove  that two p-groups $P$ and $U$ are conjugate in $F$ if and only if $PN/N$ and $UN/N$ are conjugate in $F/N.$ Moreover the analogous  following formula  holds for  centralizers: $$\CC_{F/N}(PN/N)=\CC_F(P)N/N.$$ The inclusion $\supseteq$ holds trivially again. If $fN\in \CC_{F/N}(PN/N)$ then $fN$ normalizes $(PN)/N$ and we can suppose by the first part of the remark that $f\in \NN_F(P)$. Moreover  for each $t\in P$ there exists $n\in N$ such that   $t^f=tn$.  Since $t^f\in P$ and $P\cap N$ is trivial, $n=1$ for each $n$ and $f\in \CC_F(P).$   

\end{remark}


\begin{Proposition}\label{prop:s-i-closed-in-odd-case}
Let $S$ be a Sylow 2-subgroup of $G$ containing a hyperelliptic involution $h$ such that $Fix(h)$ has $r>2$ components. If $r$ is an odd number, then either $h$ is the unique hyperelliptic involution contained in $S$ or the subgroup $B$ generated by all the involutions of $S$ whose fixed-point sets have $r$ components  is dihedral and strongly involution closed in $S$. 
\end{Proposition}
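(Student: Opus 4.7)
The plan is to adapt the proof of Proposition~\ref{prop:generate-dihedral} to the larger subgroup $B$ generated by every $r$-component involution of $S$ (not only the hyperelliptic ones), and then to deduce strong involution closure directly from the dihedral structure. If $h$ is the only hyperelliptic involution of $S$ we are in the first alternative of the statement and are done, so I henceforth assume that $S$ contains another hyperelliptic involution $t$. By Lemma~\ref{lem:central-element} applied to the $2$-group $S$ there exists an involution $x\in S$ which is central in $S$ and whose fixed-point set has $r$ components; in particular $x\in B$.

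Because $x$ is central in $S$, the group $B\leq S$ permutes the $r$ components of $Fix(x)$. Since $B$ is a $2$-group, its orbits on this set of components have $2$-power size, and here the hypothesis $r$ odd enters crucially: it forces at least one orbit of size one, i.e., a component $K\subset Fix(x)$ which is stabilized by every element of $B$. Thus $B$ is contained in the $K$-stabilizer, and Remark~\ref{rem:stabilizer-simple-closed-curve}, combined with the fact that $B$ is a $2$-group, yields an embedding $B\leq \mathbb{Z}_2\ltimes(\mathbb{Z}_{2^a}\times\mathbb{Z}_{2^b})$ for some non-negative integers $a,b$. Using Lemma~\ref{lem:r-components} with each hyperelliptic involution of $B$ playing the role of the hyperelliptic involution in that lemma, together with the symmetry of strong inversion between distinct commuting $r$-component involutions, one checks that every hyperelliptic involution of $B$ different from $x$ is a strong inversion on $Fix(x)$, and in particular acts on $K$ as a reflection.

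I now run the case analysis of Proposition~\ref{prop:generate-dihedral}. Assuming by contradiction that $B$ is neither cyclic nor dihedral, one has $B\cong\mathbb{Z}_2\ltimes(\mathbb{Z}_{2^a}\times\mathbb{Z}_{2^b})$ with $a,b\geq 1$. I choose as generating $K$-reflection $t_0$ either $h$ itself (if $h\neq x$, in which case $h$ is a $K$-reflection by the preceding step) or a hyperelliptic $t\neq h$ (if $h=x$; such a $t$ is automatically a $K$-reflection). Then $t_0\in B$ is hyperelliptic and, following the three sub-cases ($a=b=1$; $a,b>1$; $a=1,\ b>1$) exactly as in Proposition~\ref{prop:generate-dihedral}, one produces a rank-three elementary abelian subgroup of $B$ generated by $t_0$ and two further $G$-conjugates of $t_0$, all hyperelliptic because conjugation preserves hyperellipticity. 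This contradicts Lemma~\ref{lem:elementary-abelian-group}, whose conclusion bounds the rank of the subgroup generated by hyperelliptic involutions in any such elementary abelian $2$-group by two. Therefore $B$ is cyclic of order $2$ (impossible, because $B$ contains two distinct hyperelliptic involutions $h$ and $t$) or dihedral of order at least $4$.

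It remains to verify strong involution closure. Every involution of the dihedral $2$-group $B$ is $r$-component: the central involution of $B$ equals $x$ (since $x\in Z(S)\cap B$ and the center of a dihedral $2$-group has exponent~$2$), while each non-central involution of $B$ is a reflection, $B$-conjugate to one of the two generating $r$-component involutions, and being $r$-component is preserved by $G$-conjugation. Hence for any involution $c$ of $B$ and any $f\in G$ with $c^f\in S$, the element $c^f$ is an $r$-component involution of $S$ and is, by the very definition of $B$, a generator of $B$, so $c^f\in B$. This gives $I^f\cap S\subset B$. The main obstacle in the plan is the orbit-counting step in the second paragraph: it is precisely the hypothesis that $r$ is odd that forces $B$ to stabilize some component of $Fix(x)$, and this is the step that allows one to invoke Remark~\ref{rem:stabilizer-simple-closed-curve} and to run the adaptation of Proposition~\ref{prop:generate-dihedral}; without it $B$ could permute all components non-trivially and the dihedral structure would not follow.
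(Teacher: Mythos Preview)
Your orbit--counting idea is nice: since $B$ is a $2$-group permuting the $r$ components of $Fix(x)$ and $r$ is odd, some component $K$ is $B$-invariant, so $B$ embeds in a group $\mathbb{Z}_2\ltimes(\mathbb{Z}_{2^a}\times\mathbb{Z}_{2^b})$ as in Remark~\ref{rem:stabilizer-simple-closed-curve}. The cases $a=b=1$ and $a,b>1$ can indeed be handled essentially as you indicate, and your argument for strong involution closure at the end is correct.

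The gap is in the mixed case $a=1$, $b>1$. You assert that ``exactly as in Proposition~\ref{prop:generate-dihedral}'' one obtains three $G$-conjugate hyperelliptic involutions generating a rank-three elementary abelian subgroup. But that is \emph{not} what the proof of Proposition~\ref{prop:generate-dihedral} does in this sub-case: there the conclusion is only that all \emph{hyperelliptic} involutions lie in a proper dihedral subgroup $\langle t,f\rangle$ (or $\langle t,fg\rangle$). That conclusion suffices for $H$, because $H$ is by hypothesis generated by hyperelliptic involutions; it does \emph{not} suffice for $B$, which is generated by the larger class of $r$-component involutions. Concretely, one cannot exclude with your tools that the hyperelliptic involutions all sit inside the dihedral factor while a non-hyperelliptic $r$-component involution of the form $g\,t_0 f^{\text{odd}}$ forces the extra $\mathbb{Z}_2$ factor. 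Such an element commutes with no hyperelliptic $K$-reflection when $b>1$, so Lemma~\ref{lem:elementary-abelian-group} (which requires a hyperelliptic involution in the rank-three group under consideration) cannot be invoked to rule it out.

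The paper closes precisely this gap by a separate geometric step (the ``Claim'' in its proof): for any $r$-component involution $t\notin D$ one passes to the quotient $M/D\cong S^3$ and uses connectedness of the fixed set of $\bar t$ there, together with a \emph{second} application of the hypothesis that $r$ is odd, to force $t$ to strongly invert every component of $Fix(f^{2^{n-1}})$ and hence $\langle t,D\rangle$ to be dihedral. This is the missing ingredient; a single use of ``$r$ odd'' via orbit counting does not appear to be enough to exclude $B\cong\mathbb{Z}_2\times D_{2^{b+1}}$.
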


\begin{proof}
 We may assume that there are at least two hyperelliptic involutions in $G$; we denote by $D$ the subgroup of $S$ generated by the hyperelliptic involutions. The group $D$ is dihedral by Proposition~\ref{prop:generate-dihedral} and $D\leq B$. We denote by $2^{n+1}$ the order of $D.$ If $B=D$ then the fixed-point set of each involution in $B$ has $r$-components and we are done. So assume that $B>D$.  Write $D=\langle f, h\rangle$, where $f$ generates a cyclic subgroup of order $2^n$ and $h$ is a hyperelliptic involution inverting $f$. We denote by $L$ the fixed-point set of $f^{2^{n-1}}.$ We remark that  $f^{2^{n-1}}$ can be hyperelliptic or not, but since $f^{2^{n-1}}$ can be seen as a product of two commuting hyperelliptic involutions,  in any case the fixed-point set $L$ has $r$ components by Lemma~\ref{lem:r-components}. 
\medskip

\textbf{Claim:}\textit{For every involution $t\in B\setminus D$ whose fixed-point set has $r$ components, $t$ acts as a strong inversion on each component of $L$ and $\langle t, D\rangle$ is dihedral.} 

By Lemma~\ref{lem:r-components} all the elements of $D$ fix setwise each component of $L.$ If $n>1$ the element  $t$ centralizes $f^{2^{n-1}};$ if $n=1$ the element $t$ centralizes at least one involution in $D$ and we can suppose that $f^{2^{n-1}}$ commutes with $t.$  Since $t$ has order two and $r$ is odd, $t$ fixes setwise one of the components of $L$, which we denote by $K.$  If $t$ acted as a rotation on $K$ it would commute with the hyperelliptic involution; in particular $f^{2^{n-1}},$ $t$ and $h$ would generate an elementary group or rank three contradicting Lemma~\ref{lem:elementary-abelian-group}, so $t$ acts on $K$ as a reflection and  each element of $\langle t, D\rangle$ fixes setwise the simple closed curve $K$. By Remark~\ref{rem:stabilizer-simple-closed-curve} the group $\langle t, D\rangle$ is either dihedral or isomorphic to $\mathbb{Z}_2\times \mathbb{D}_{2^n}.$ 
 If $\langle t, D\rangle$ was not  dihedral, $t$ would commute with a hyperelliptic involution different from  $f^{2^{n-1}}$ and $\langle t, D\rangle$ contains an elementary subgroup of rank three contradicting Lemma~\ref{lem:elementary-abelian-group}.

It remains to prove that $t$  acts as a strong inversion on each component of $L.$
The involution $t$ projects to $\bar t$, an involution acting on  the quotient $M/D$. Since by Lemma~\ref{lem:quotient} $M/D$ is homeomorphic to $S^3$ , the involution  $\bar t$ has non-empty and connected fixed-point set. Since $\langle t, D\rangle$ is dihedral, the only non trivial element in $D$ leaving invariant $Fix(t)$ is $f^{2^{n-1}}.$ Since $r$ is odd $f^{2^{n-1}}$ leaves invariant at least a component of $Fix(t)$, then the only possibility to get a connected fixed-point set for $\bar t$ is  that the $f^{2^{n-1}}$ acts as a strong inversion on  each component of $Fix(t).$
Since each component of $L$ can intersect $Fix(\bar t)$ in at most two points, we obtain that $t$ acts as a strong inversion on each component of $L.$ This concludes the proof of the Claim.

\medskip
By the previous Claim and Lemma~\ref{lem:r-components}, all the generating involutions of $B$ leave invariant all the components of $L$, in particular $K$; therefore $B$ is isomorphic to a subgroup of  a semidirect product $\mathbb{Z}_2 \ltimes (\mathbb{Z}_{2^a}\times\mathbb{Z}_{2^b})$   where $\mathbb{Z}_2$ operates on the normal
subgroup $\mathbb{Z}_{2^a}\times\mathbb{Z}_{2^b}$ by sending each element to its inverse (see Remark~\ref{rem:stabilizer-simple-closed-curve}).
Assume by contradiction that  $B$ is not dihedral. By Lemma~\ref{lem:r-components}, the subgroup $B$ is not elementary abelian. Then the center of $B$ contains an involution $c$ not contained in $D$ and $c$ centralizes an element of order at least $4$. It follows that $c$ acts as a non-trivial rotation on every component of $K$  and hence $ch$  acts as a strong inversion on $K$. By Lemma~\ref{lem:elementary-abelian-group} we get that $ch$ has $r$ components and the Claim yields that $\langle D, ch\rangle=D\times \langle c\rangle$ is dihedral: a contradiction. Since $B$ is dihedral, the fixed-point set of each involution in $B$ has $r$  components and $B$ is strongly involution closed. 
\end{proof}

\subsection{Structure theorem for arbitrary groups}\label{possible-groups}

We recall some algebraic definitions we use in this subsection. 
A finite group $Q$ is \emph{quasisimple} if it is perfect (the
abelianised group is trivial) and the factor group $Q/\mathcal{Z}(Q)$ of $Q$ by its centre
$\mathcal{Z}(Q)$ is a non-abelian simple group (see \cite[chapter 6.6]{suzuki2}). A group $E$ is 
\emph{semisimple} if it is perfect and the factor group $E/\mathcal{Z}(E)$ is a direct 
product of non-abelian simple groups. A semisimple group $E$ is a central 
product of quasisimple groups which are uniquely determined. Any finite group 
$F$ has a unique maximal semisimple normal subgroup $\mathcal{E}(F)$ (maybe trivial), 
which is characteristic in $F$. The subgroup $\mathcal{E}(F)$ is called the \emph{layer} 
of $F$ and the quasisimple factors of $\mathcal{E}(F)$ are called the \emph{components} 
of $F$.

The maximal normal nilpotent subgroup of a finite group $F$ is called the 
\emph{Fitting subgroup} and we denote it  by $\mathcal{F}(F)$. The Fitting subgroup 
commutes elementwise with the layer of $F$. The normal subgroup generated by 
$\mathcal{E}(F)$ and by $\mathcal{F}(F)$ is called the \emph{generalised Fitting subgroup} and we denote it by $\mathcal{F}^*(F)$. The generalised Fitting subgroup has the 
important property to contain its centraliser in $F$, which thus coincides 
with the centre of $\mathcal{F}^*(F)$. For further properties of the generalised Fitting 
subgroup see \cite[Section 6.6.]{suzuki2}. 

If $F$ is a finite group we denote by $\OO(F)$ the maximal normal subgroup of odd order of $F.$

\begin{Proposition}\label{prop:component-normally-generated} 
 Let  $S$ be a Sylow 2-subgroup of $G$ containing a hyperelliptic involution $h$ such that $Fix(h)$ has $r>2$ components and let $B$ be  the subgroup of $S$ generated by the involutions whose fixed-point sets have $r$ components and let $N$ be the subgroup normally generated by $B.$
If $r$ is an odd number, then one of the following conditions holds:

\begin{enumerate}
\item $S$ contains a unique hyperelliptic involution, the subgroup $\langle h,\OO(G) \rangle/\OO(G)$ lies in the centre of $G/\OO(G)$ and  $G/\langle h,\OO(G) \rangle$ is isomorphic to a  quotient of a finite subgroup of $\SO4$;

\item  $S$ contains at least two hyperelliptic involutions, the Sylow $2$-subgroup of $N$  is dihedral  and  one of these two situations occurs:

\begin{enumerate}
 \item  $N=B \OO(N)$
\item $ N/\OO(N)$ is isomorphic to $\mathbb{A}_7$, $\textrm{PGL}(2,q)$ for an odd prime power $q$, or  $\textrm{PSL}(2,q)$ for an odd prime power  $q>5$.

\end{enumerate}

\end{enumerate}

\end{Proposition}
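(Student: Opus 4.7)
My plan is to split on whether the Sylow 2-subgroup $S$ of $G$ containing $h$ has a single hyperelliptic involution or at least two, following the dichotomy of Proposition~\ref{prop:s-i-closed-in-odd-case}, and then treat each alternative with a dedicated tool: Glauberman's $Z^*$-theorem for the former, and the structural results of Goldschmidt and Hall (followed by Gorenstein--Walter) for the latter.

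If $h$ is the only hyperelliptic involution in $S$, then since conjugation preserves the hyperelliptic property, $h$ is not $G$-conjugate to any other involution of $S$. Glauberman's $Z^*$-theorem then yields $h\OO(G)\in\mathcal{Z}(G/\OO(G))$, so $G=C_G(h)\OO(G)$, and the isomorphism theorems give
\[
G/\langle h,\OO(G)\rangle \;\cong\; C_G(h)\bigl/\bigl\langle h,\,C_G(h)\cap\OO(G)\bigr\rangle,
\]
a quotient of $C_G(h)/\langle h\rangle$. By Proposition~\ref{prop:centralizer}, the latter embeds into $\SO4$, which establishes case~(1).

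Assume now that $S$ contains at least two hyperelliptic involutions. Proposition~\ref{prop:s-i-closed-in-odd-case} supplies a dihedral subgroup $B\leq S$, strongly involution closed in $S$, whose involutions are precisely those of $S$ whose fixed-point set has $r$ components. Set $N=\langle B^G\rangle$. A first step is to show that $B$ is a Sylow 2-subgroup of $N$: combining strong involution closure with Lemma~\ref{lem:r-components}, every involution of $S\cap N$ must have $r$-component fixed-point set and so lie in $B$; applying Proposition~\ref{prop:generate-dihedral} to the 2-group $S\cap N$ then forces $S\cap N=B$, which is dihedral. I then invoke the structural theorems of Goldschmidt~\cite{goldschmidt} and Hall~\cite{hall} for finite groups possessing a strongly involution closed dihedral 2-subgroup: their conclusion is that $N/\OO(N)$ is either 2-nilpotent over the image of $B$, giving $N=B\,\OO(N)$, or else has, modulo its centre, a single nonabelian simple quotient with dihedral Sylow 2-subgroup. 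In the latter situation the Gorenstein--Walter classification~\cite{gorenstein-walter} pins $N/\OO(N)$ down to $\mathbb{A}_7$, $\mathrm{PGL}(2,q)$ with $q$ an odd prime power, or $\mathrm{PSL}(2,q)$ with $q$ an odd prime power and $q>5$; the exceptional $\mathrm{PSL}(2,3)$ and $\mathrm{PSL}(2,5)$ are solvable or get absorbed into the first alternative $N=B\,\OO(N)$.

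The main obstacle I anticipate is the fusion bookkeeping required to derive $S\cap N=B$ from the strong \emph{involution} closure of $B$ (as opposed to full strong closure), and the correct invocation of the Goldschmidt--Hall machinery under this slightly weaker hypothesis; Remark~\ref{rem:normalizer-coprime-quotient} should be useful in transferring the conclusion between $N$ and $N/\OO(N)$. Once the Sylow 2-subgroup of $N$ is pinned down to the dihedral group $B$, the trichotomy collapses cleanly, and matching to the Gorenstein--Walter list reduces to checking the centralizer of involutions in each admissible quotient against Proposition~\ref{prop:centralizer}.
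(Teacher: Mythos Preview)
Your treatment of case~(1) is correct and matches the paper's. The gap lies in case~(2), where two geometric elimination steps are missing.

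The paper does not try to prove $S\cap N=B$ directly. Instead it invokes \cite[Theorem~3.1]{hall}: a strongly involution closed dihedral subgroup $B$ of $S$ is either strongly closed in $S$ or of index~$2$ in a strongly closed semidihedral subgroup $Q\leq S$. The semidihedral alternative is then ruled out geometrically: all non-central involutions of $Q$ are conjugate, hence hyperelliptic, and act as reflections on each component of $\mathrm{Fix}(c)$; since $r$ is odd, an element $f\in Q$ of order~$4$ generating a non-normal cyclic subgroup must fix some component of $\mathrm{Fix}(c)$ setwise and act on it as a rotation, so $f$ together with any non-central involution would generate an abelian or dihedral group, contradicting the semidihedral structure. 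Your direct route to $S\cap N=B$ runs into precisely this obstruction: even if every involution of $S\cap N$ lay in $B$, the group $S\cap N$ could be semidihedral of order $2|B|$, and Proposition~\ref{prop:generate-dihedral} only controls the subgroup generated by hyperelliptic involutions, not $S\cap N$ itself. Moreover, the claim that every involution of $S\cap N$ has $r$-component fixed-point set is unsupported: strong involution closure only traps $G$-conjugates of involutions of $B$ that land in $S$, and an arbitrary involution of $N=\langle B^G\rangle$ need not be such a conjugate; Lemma~\ref{lem:r-components} does not bridge this.

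Second, once $B$ is known to be strongly closed, Hall's main theorem yields your list \emph{plus} the extra possibility $N/\OO(N)\cong\mathrm{PSU}(3,4)$, which you omit. The paper eliminates it by noting that in that case $B$ coincides with the centre $Z(T)$ of a Sylow $2$-subgroup $T$ of $N$ and $T/Z(T)$ is elementary abelian of rank~$4$, contradicting Lemma~\ref{lem:two-commuting-hyperelliptic}. Both eliminations are essential and genuinely use the geometry of the action; they are not formal consequences of the Goldschmidt--Hall machinery or of Gorenstein--Walter.
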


\begin{proof}
If $h$ is the only hyperelliptic involution in $S$, then we can use the Glauberman's $Z^*-$ theorem and obtain that $h\OO$ is contained in the center of $G/\OO.$ By  Remark~\ref{rem:normalizer-coprime-quotient} and Proposition~\ref{prop:centralizer}, we get that $(1)$ holds. We  assume that $S$ contains at least two hyperelliptic involutions. 
By Proposition~\ref{prop:s-i-closed-in-odd-case}, the subgroup $B$  is  a strongly involution closed dihedral subgroup of $G.$ By~\cite[Theorem 3.1]{hall} we obtain that either $B$ is strongly closed in $S$ or $B$ is contained with index $2$ in a strongly closed semidihedral subgroup $Q$. 

We show that the latter case does not occur. 
Suppose by contradiction  that $B$ is  a maximal subgroup in a strongly closed semidihedral subgroup $Q$ and denote by $c$ the unique central involution of $Q$. Then $Fix(c)$ has $r$ components by Lemma~\ref{lem:central-element} and there exists at least another hyperelliptic involution that is not central in $Q$.
All the non central involutions of $Q$ are conjugate (see \cite[Lemma 1]{alperin-brauer-gorenstein}); so every non central involution of $Q$ is hyperelliptic and,  by Lemma~\ref{lem:r-components}, acts as a reflection on each component of $Fix(c)$. Let $f$ be an element of order $4$ in $Q$ not generating a normal subgroup of $Q.$ We note that $f$ leaves invariant $Fix(c).$ 
Since $Fix(c)$ has an odd number of components, $f$ fixes setwise one component of  $Fix(c)$. By Remark~\ref{rem:stabilizer-simple-closed-curve} the element $f$   acts  as a rotation on the fixed component; this implies that $f$ and  any non central involution in $Q$ generate an abelian or dihedral group and this is impossible.  

Therefore we have that $B$  is strongly closed in $S$ and by the main theorem of~\cite{hall} cases $(2a)$ or $(2b)$ hold, or $N/\OO(N)\cong PSU(3,4)$. In this case, it is straightforward to see that $D$ is an elementary abelian group of order $4$, the subgroup $D$ coincides with the center of a Sylow $2$-subgroup $T$ of $N$ and $T/Z(T)$ is elementary abelian of rank $4$, contradicting Lemma~\ref{lem:two-commuting-hyperelliptic}.
\end{proof}







\begin{Proposition}\label{prop:final-one}
 Let  $S$ be a Sylow 2-subgroup of $G$ containing a hyperelliptic involution $h$ such that $Fix(h)$ has $r>2$ components and $B$ be  the subgroup of $S$ generated by the involutions with fixed-point set with $r$ components.  If $r$ is odd and $S$ contains at least two hyperelliptic involutions, then one of the two following conditions holds:
\begin{enumerate}
\item $G$ is solvable, $B\OO(G)$ is  normal in $G$ and  $G/B\OO(G)$ is isomorphic to a  quotient of a finite subgroup of $\SO4$;
\item $G/\OO(G)$ contains a unique component $Q/\OO(G)$ isomorphic  either to $\mathbb{A}_7$ or to   $\textrm{PSL}(2,q)$ with $q$ an odd prime power, and the  Fitting subgroup of $G/\OO(G)$ is cyclic or dihedral. The quotient $G/Q$ is a solvable group with cyclic Sylow $p$-subgroups for each $p$ odd.

\end{enumerate}

\end{Proposition}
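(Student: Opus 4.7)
Set $\bar G = G/\OO(G)$ so that $\OO(\bar G) = 1$, and observe that, as $\OO(N)$ is characteristic in $N\triangleleft G$, we have $\OO(N)=N\cap \OO(G)$ and hence $\bar N := N\OO(G)/\OO(G)\cong N/\OO(N)$. The plan is to bifurcate along Proposition~\ref{prop:component-normally-generated}: either (I) $\bar N\cong B$ is a dihedral $2$-group (case~(2a) of that proposition), or (II) $\bar N$ is isomorphic to $\mathbb{A}_7$, $\textrm{PGL}(2,q)$ or $\textrm{PSL}(2,q)$ for some odd prime power $q>5$ (case~(2b)). In case~(II) I let $Q\leq G$ be the full preimage in $G$ of the unique non-abelian simple normal subgroup of $\bar N$ (so $Q=N$ unless $\bar N\cong \textrm{PGL}(2,q)$, in which case $\bar Q:=Q/\OO(G)\cong \textrm{PSL}(2,q)$ is its index-$2$ subgroup); then $\bar Q$ is tautologically a component of $\bar G$ of the required isomorphism type, and I aim at conclusion~(2). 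In case~(I) I aim at conclusion~(1), unless an extra component appears and absorbs the argument into conclusion~(2).

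The crucial common observation is that any component $L$ of $\bar G$ distinct from $\bar Q$, and any component at all in case~(I), centralizes the image $\bar h$ of the hyperelliptic involution: in case~(II) because distinct components commute, and in case~(I) because the nilpotent $\bar N$ lies in the Fitting subgroup $\mathcal{F}(\bar G)$ and the layer $\mathcal{E}(\bar G)$ centralizes $\mathcal{F}(\bar G)$ elementwise. By Proposition~\ref{prop:centralizer}, $C_{\bar G}(\bar h)/\langle \bar h\rangle$ is a quotient of a finite subgroup of $\SO4$, whose only non-abelian simple section is $\mathbb{A}_5$ (by the classification in~\cite{conway-smith,mecchia-seppi}). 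Combined with $L\cap \bar N\leq Z(L)$ (quasi-simplicity of $L$ and normality of $\bar N$), this forces $L/Z(L)\cong \mathbb{A}_5$, so $L\in \{\mathbb{A}_5, SL(2,5)\}$. I then rule out such extra components: in case~(II), $C_{\bar Q}(\bar h)$ is already a dihedral group of order $q\pm 1$ for $\textrm{PSL}(2,q)$ with $q>5$, or contains $\mathbb{S}_3$ for $\mathbb{A}_7$, and the explicit classification of finite subgroups of $\SO4$ leaves no room for a commuting $\mathbb{A}_5$-section beside this dihedral subgroup; in case~(I), an $L\cong SL(2,5)$ has its central involution in $\bar N$, producing an elementary abelian subgroup of rank three made of hyperelliptic involutions and contradicting Lemma~\ref{lem:elementary-abelian-group} together with the strong involution closure of $B$ (Proposition~\ref{prop:s-i-closed-in-odd-case}), while an $L\cong \mathbb{A}_5$ simply places the theorem into conclusion~(2) with $\bar Q=L\cong \textrm{PSL}(2,5)$.

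What remains is to derive the stated structure. Conclusion~(1) follows in case~(I) with no component present: $\mathcal{F}^*(\bar G)=\mathcal{F}(\bar G)\supseteq \bar N$ is a $2$-group, $\bar G/C_{\bar G}(\bar N)$ embeds in $\textrm{Aut}(\bar N)$ which is solvable (automorphism group of a dihedral $2$-group), and $C_{\bar G}(\bar N)\leq C_{\bar G}(\bar h)$ is solvable because the absence of components forbids an $\mathbb{A}_5$-section in $C_{\bar G}(\bar h)/\langle \bar h\rangle$. Hence $G$ is solvable, $B\OO(G)=N\OO(G)$ is normal in $G$, and $G/B\OO(G)\cong \bar G/\bar N$ is a quotient of a finite subgroup of $\SO4$. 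For conclusion~(2), with $\bar Q$ the unique component, $C_{\bar G}(\bar Q)$ has trivial layer and trivial odd radical, so $\mathcal{F}^*(C_{\bar G}(\bar Q))=O_2(C_{\bar G}(\bar Q))=\mathcal{F}(\bar G)$; this $2$-group embeds into $C_{\bar G}(\bar h)/\langle \bar h\rangle$ and has $2$-rank at most $4-2=2$ by Proposition~\ref{prop:2-rank}, so the classification of $2$-subgroups of $\SO4$ of rank at most two, together with the fact that $\mathcal{F}(\bar G)$ is normalized by the dihedral $\bar N$, forces $\mathcal{F}(\bar G)$ to be cyclic or dihedral. Finally, $\bar G/\bar Q$ is an extension of $C_{\bar G}(\bar Q)$ by a subgroup of $\textrm{Out}(\bar Q)$, while $C_{\bar G}(\bar Q)/Z(\mathcal{F}(\bar G))$ embeds in $\textrm{Aut}(\mathcal{F}(\bar G))$; since $\textrm{Out}(\bar Q)$ equals $\mathbb{Z}_2$ for $\mathbb{A}_7$ and $\mathbb{Z}_2\times \mathbb{Z}_f$ for $\textrm{PSL}(2,p^f)$ (both with cyclic Sylow $p$-subgroups for every odd $p$), and $\textrm{Aut}$ of a cyclic or dihedral $2$-group of rank at most two also has cyclic Sylow $p$-subgroups for odd $p$, we conclude that $G/Q$ is solvable with cyclic Sylow $p$-subgroups for each odd prime~$p$. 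The hardest step is the uniqueness of the component, which rests entirely on the explicit structure of finite subgroups of $\SO4$ combined with the dihedral normalizer $\bar N$.
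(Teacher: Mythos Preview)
Your bifurcation along Proposition~\ref{prop:component-normally-generated} is the right skeleton, but the decisive tool is mis-chosen: you work throughout with Proposition~\ref{prop:centralizer} (the centralizer of a \emph{single} hyperelliptic involution is a quotient of a finite subgroup of $\SO4$), whereas the paper uses Lemma~\ref{lem:two-commuting-hyperelliptic} (the centralizer of \emph{two} commuting hyperelliptic involutions is, modulo one of them, inside $\Z_2\ltimes(\Z_n\times\Z_m)$). Since $B$---indeed already the dihedral subgroup $D\le B$ generated by the hyperelliptic involutions---contains such a pair $h_1,h_2$, one gets at once that $\CC_G(B)$ is solvable, and by Remark~\ref{rem:normalizer-coprime-quotient} so is $\CC_{\bar G}(\bar B)$.

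This single observation fills every place where your argument is incomplete. In case~(2a) it makes $\NN_G(B)$ solvable (hence $G$ solvable) directly, so no extra $\mathbb A_5$ component can arise; your sentence ``the absence of components forbids an $\mathbb A_5$-section in $C_{\bar G}(\bar h)/\langle\bar h\rangle$'' is circular as written, since a group with trivial layer can still have non-abelian simple composition factors. In case~(2b) any second component would lie in $\CC_{\bar G}(\bar N)\le\CC_{\bar G}(\bar B)$, which is solvable---contradiction---so uniqueness of the component follows with no recourse to the $\SO4$ classification. Your appeal to that classification in fact fails: $\SO4$ contains the image of $2I\times 2D_{2n}\le SU(2)\times SU(2)$ for every $n$, whose quotient by its centre is $\mathbb A_5\times D_{2n}$, so the assertion that there is ``no room for a commuting $\mathbb A_5$-section beside this dihedral subgroup'' is false. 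Finally, for the Fitting subgroup the paper again invokes Lemma~\ref{lem:two-commuting-hyperelliptic}: a lift $F$ of $\mathcal F(\bar G)$ centralizes $\langle h_1,h_2\rangle$ and meets it trivially, so $F$ embeds (alongside $h_2$) into $\Z_2\ltimes(\Z_n\times\Z_m)$, which forces $F$ cyclic or dihedral. Your argument via ``$2$-rank $\le 2$ plus normalized by a dihedral group'' does not suffice, since $2$-groups of rank two include quaternion, semidihedral, and $\Z_{2^a}\times\Z_{2^b}$.
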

\begin{proof}
Let $N$ be the subgroup normally generated by $B$ in $G$. We remark that $\OO(N)=\OO(G)\cap N.$ Cases (2a) or (2b) in Proposition~\ref{prop:component-normally-generated} hold.

First we suppose that case (2a) holds, that is $N=B \OO(N)$. In particular $B \OO(N)$ and $B \OO(G)$ are normal in $G$. By Remark~\ref{rem:normalizer-coprime-quotient} and isomorphisms Theorems, 
the group  $G/B\OO(G)$ is isomorphic to the  quotient of $\NN_G(B)$ by $B$;  by  Lemma~\ref{lem:quotient} the group $\NN_G(B)/B$ is isomorphic to a finite subgroup of $\SO4$. Since the automorphism group of a dihedral groups is solvable, the quotient   $\NN_G(B)/\CC_G(B)$ is solvable too. Moreover, the group  $\CC_G(B)$ commutes elementwise with the elementary abelian $2$-subgroup of rank two of $B$ generated by two hyperelliptic involutions.  By Lemma~\ref{lem:two-commuting-hyperelliptic}, the subgroup $\CC_G(B)$  is solvable.  Hence $\NN_G(B)$ is solvable and in this case we are done.

Suppose now to be in the case (2b) of Proposition~\ref{prop:component-normally-generated}. If $H$ is a subgroup of $G$, we denote by $\overline{H}$ the projection   $H\OO(G)/\OO(G)$ of $H$ to $G/\OO(G)$; in particular $\overline{G}$ is  $G/\OO(G).$
By the structure of $\overline{N}$ it follows that $\overline{G}/\CC_{\overline{G}}(\overline{N})$ has a unique simple section isomorphic either to $\mathbb{A}_7$ or to   $\textrm{PSL}(2,q)$. On the other hand, by Remark~\ref{rem:normalizer-coprime-quotient}, $\CC_{\overline{G}}(\overline{N})\leq \CC_{\overline{G}}(\overline{B})\cong \CC_G(B)/\CC_G(B)\cap \OO(G)$ which is solvable by Lemma~\ref{lem:two-commuting-hyperelliptic}. This implies that the layer of $\overline{G}$  is isomorphic either to $\mathbb{A}_7$ or to   $\textrm{PSL}(2,p^n)$.

Consider now  the Fitting subgroup $\mathcal{F}(\overline{G})$ of $\overline{G}$ which   is a 2-group by definition of $\OO(G).$  We remark that $\mathcal{F}(\overline{G})\cap \mathcal{E}(\overline{G})=\{1\}.$
The subgroup $\mathcal{F}(\overline{G})$ commutes with an elementary 2-subgroup of rank 2 generated by two projections of hyperelliptic involutions and contained in $\mathcal{E}(\overline{G}).$ By Remark~\ref{rem:normalizer-coprime-quotient} the subgroup $\mathcal{F}(\overline{G})$ is the projection of a  2-subgroup of $G$  commuting  with  an elementary 2-subgroup of rank 2 generated by two  hyperelliptic involutions. By  Lemma~\ref{lem:two-commuting-hyperelliptic} the subgroup $\mathcal{F}(\overline{G})$ is cyclic or dihedral. 
The product $\mathcal{F}(\overline{G})\times\mathcal{E}(\overline{G})$ is the generalized Fitting subgroup of $\overline{G}$, and hence the quotient  of $\overline{G}$ by the center of generalized Fitting subgroup  is isomorphic to a subgroup of $\textrm{Aut}( \mathcal{F}(\overline{G}))\times \textrm{Aut}(\mathcal{E}(\overline{G}))$. This implies that (2) holds.
 \end{proof}

We summarize  in the following theorem the results contained in Propositions~\ref{prop:component-normally-generated} and \ref{prop:final-one}.

\begin{Theorem}\label{final-theorem} 
Let  $S$ be a Sylow 2-subgroup of $G$ containing a hyperelliptic involution $h$ such that $Fix(h)$ has $r>2$ components. If $r$ is odd one of the following cases occurs:

\begin{enumerate}
\item the subgroup $\langle h,\OO(G) \rangle/\OO(G)$ lies in the centre of $G/\OO(G)$ and  $G/\langle h,\OO(G) \rangle$ is isomorphic to a  quotient of a finite subgroup of $\SO4 ;$

\item  $G$ is solvable and  there exists a dihedral 2-subgroup $B$ such that  $B\OO(G)$ is  normal in $G$ and  $G/B\OO(G)$ is isomorphic to a  quotient of a finite subgroup of $\SO4 ;$

\item $G/\OO(G)$ contains a normal subgroup $E\times F$ containing its centralizer; the subgroup $E$ is isomorphic  either to $\mathbb{A}_7$ or to   $\textrm{PSL}(2,q)$ with $q$ an odd prime number, and $F$ is cyclic or dihedral. The quotient of $(G/\OO(G))/ E\times F$ is a solvable group with cyclic Sylow p-subgroups for each $p$ odd.

\end{enumerate}

\end{Theorem}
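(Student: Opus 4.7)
The plan is to derive Theorem~\ref{final-theorem} as a direct consequence of Propositions \ref{prop:component-normally-generated} and \ref{prop:final-one}, doing little more than reorganising the conclusions and rephrasing case (2) of Proposition~\ref{prop:final-one} in terms of the generalised Fitting subgroup. Concretely, the proof splits on whether $S$ contains one or at least two hyperelliptic involutions, exactly as Proposition~\ref{prop:component-normally-generated} does.

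First, if $h$ is the only hyperelliptic involution in $S$, Proposition~\ref{prop:component-normally-generated}(1) immediately gives case (1) of the theorem, so there is nothing to prove here. Assume from now on that $S$ contains at least two hyperelliptic involutions, so Proposition~\ref{prop:final-one} applies. Its subcase (1) asserts that $G$ is solvable with $B\OO(G)$ normal and $G/B\OO(G)$ a quotient of a finite subgroup of $\SO4$; this is precisely case (2) of the theorem. So the only substantive step is to repackage subcase (2) of Proposition~\ref{prop:final-one} into case (3) of the theorem.

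For that step, let $\overline{G}:=G/\OO(G)$ and take $E:=Q/\OO(G)$ to be the unique component provided by Proposition~\ref{prop:final-one}(2), and $F:=\mathcal{F}(\overline{G})$. By Proposition~\ref{prop:final-one}(2), $E$ is simple (isomorphic to $\mathbb{A}_7$ or to $\textrm{PSL}(2,q)$ for some odd $q$) and $F$ is cyclic or dihedral. Since $F$ is a $2$-group and $E$ is non-abelian simple, $E\cap F=\{1\}$; since the layer commutes elementwise with the Fitting subgroup, the subgroup generated by $E$ and $F$ is the internal direct product $E\times F$, which is exactly the generalised Fitting subgroup $\mathcal{F}^*(\overline{G})$. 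By the standard property of $\mathcal{F}^*$, this subgroup is normal in $\overline{G}$ and contains its own centraliser, yielding the first part of case (3). For the quotient, the map $\overline{G}/(E\times F)\hookrightarrow \mathrm{Out}(E)\times \mathrm{Aut}(F)$ shows that this quotient is solvable (using that $\mathrm{Out}(\mathbb{A}_7)=\Z_2$ and $\mathrm{Out}(\mathrm{PSL}(2,q))$ is solvable, while $\mathrm{Aut}(F)$ is solvable for $F$ cyclic or dihedral) and that, under the restriction on $q$ in the statement, both factors have cyclic Sylow $p$-subgroups for every odd prime $p$. The conclusion on $\overline{G}/(E\times F)$ then follows.

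The only subtlety I anticipate is bookkeeping between ``prime power'' in Proposition~\ref{prop:final-one}(2) and the statement here; one needs to be careful that the claim about cyclic Sylow $p$-subgroups of the quotient is consistent with the full automorphism group $\mathrm{P}\Gamma L(2,q)$ when $q$ is a non-prime power (the field automorphisms contribute non-trivially to odd Sylow subgroups only when the exponent of $q$ has odd prime divisors). Apart from this verification, the proof is a direct assembly: no new geometric input is needed beyond what is already contained in Propositions~\ref{prop:component-normally-generated} and \ref{prop:final-one}.
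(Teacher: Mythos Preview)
Your proposal is correct and matches the paper's approach exactly: the paper gives no proof at all for this theorem beyond the sentence ``We summarize in the following theorem the results contained in Propositions~\ref{prop:component-normally-generated} and \ref{prop:final-one}.'' The repackaging you spell out (identifying $E$ with the layer, $F$ with the Fitting subgroup, and $E\times F$ with $\mathcal{F}^*(\overline{G})$) is in fact already carried out inside the proof of Proposition~\ref{prop:final-one}, so no additional argument is expected here. Your observation about ``prime'' versus ``prime power'' for $q$ is a genuine discrepancy between the statement of the theorem and Proposition~\ref{prop:final-one}(2); the paper does not address it.
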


\vskip1em

\subsubsection*{Acknowledgments}
I am  deeply indebted to Clara Franchi e Bruno Zimmermann for many valuable discussions on the topics of the paper. 

\vskip1em


\newcommand{\etalchar}[1]{$^{#1}$}

\bigskip

\bigskip

\end{document}